\documentclass{amsart} 
\usepackage{amssymb}
\usepackage{amsfonts}
\usepackage{amsmath}
\usepackage{graphicx}
\usepackage[english]{babel}

\usepackage{hyperref} 



\usepackage{times}

\numberwithin{equation}{section}

\newtheorem{prop}{Proposition}[section]
\newtheorem{prob}[prop]{Problem}
\newtheorem{thm}[prop]{Theorem}
\newtheorem{ex}[prop]{Example}

\newtheorem{rmk}[prop]{Remark}
\newtheorem{lem}[prop]{Lemma}

\newtheorem{coro}[prop]{Corollary}

\allowdisplaybreaks[4] 

\begin{document}

\title{Large deviation principle of Multiplicative Ising models on Markov-Cayley Trees}

\author[Jung-Chao Ban]{Jung-Chao Ban}
\address[Jung-Chao Ban]{Department of Mathematical Sciences, National Chengchi University, Taipei 11605, Taiwan, ROC.}
\address{Math. Division, National Center for Theoretical Science, National Taiwan University, Taipei 10617, Taiwan. ROC.}
\email{jcban@nccu.edu.tw}

\author[Wen-Guei Hu]{Wen-Guei Hu}
\address[Wen-Guei Hu]{College of Mathematics, Sichuan University, Chengdu, 610065, China}
\email{wghu@scu.edu.cn}

\author[Zongfan Zhang]{Zongfan Zhang}
\address[Zongfan Zhang]{College of Mathematics, Sichuan University, Chengdu, 610065, China}
\email{zfzhang@stu.scu.edu.cn}

\keywords{Large deviation principle, Free energy, Multiplicative Ising models}

\thanks{Ban is partially supported by the National Science and Technology Council, ROC (Contract NSTC 111-2115-M-004-005-MY3). Hu is partially supported by the National Natural Science Foundation of China (Grant No.12271381).}

 \baselineskip=1.2\baselineskip 

\begin{abstract}
  In this paper, we study the large deviation principle (LDP) for two types (Type I and Type II) of multiplicative Ising models. For Types I and II, the explicit formulas for the free energy functions and the associated rate functions are derived. Furthermore, we prove that those free energy functions are differentiable, which indicates that both systems are characterized by a lack of phase transition phenomena.
\end{abstract}

\maketitle


\section{Introduction}

In this article, we study the large deviation principle (LDP) for the
lattice spin systems with Ising $\pm 1$ spins on a Markov-Cayley tree $\mathcal{T}$ (defined later). Before presenting the main findings, below is an explanation of the motivation behind this study. The \emph{large deviation principle}, broadly speaking, is premised on the understanding that the asymptotic behavior of probabilities  $\mathbb{P} \{\frac{1}{N}S_{N}\in \Gamma \}$ as $N\rightarrow \infty$,  where  $\Gamma \subseteq \mathbb{R}$ and $\frac{1}{N}S_{N}$ is an ergodic average. The LDP problem for the standard average $\frac{1}{N}S_{N}=\frac{1}{N}\sum_{i=1}^{N}X_{i}$,  where $\{X_{i}\}_{i=1}^{\infty }$ is a sequence of  i.i.d. random variables or driven by a dynamical system, has been thoroughly investigated,  with the best general references being  \cite{dembo2009ldp, ellis2006entropy}.

Motivated by the multiple ergodic theory (\cite{assani2014survey, furstenberg2014recurrence, furstenberg1982ergodic, furstenberg2011mean})  or nonconventional ergodic theory\footnote{The term `nonconventional' was introduced in \cite{furstenberg1988nonconventional}, and now is standard in ergodic theory.}
(cf.\cite{furstenberg1988nonconventional, kifer2010nonconventional, kifer2013strong, kifer2014nonconventional, krause2022pointwise}), the LDP for the multiple ergodic average can be framed in the following terms.  Let $(X,\mathcal{B},\mu )$ be a probability space, $T_{1},\ldots ,T_{l}:X\rightarrow X$ be commuting, invertible measure preserving transformations, $f_{1},\ldots ,f_{l}\in L^{\infty }(\mu )$ and $p_{1},\ldots ,p_{l}\in \mathbb{Z}[t]$, such that the \emph{nonconventional averages} involves considering  the multiple ergodic average 
\begin{equation}
\frac{1}{N}S_{N}(x)=\frac{1}{N}\sum_{i=1}^{N}f_{1}(T_{1}^{p_{1}(i)}x)\cdots
f_{l}(T_{l}^{p_{l}(i)}x)\text{,}  \label{3}
\end{equation}%
or more broadly, 
\begin{equation}
\frac{1}{N}S_{N}(x)=\frac{1}{N}\sum_{i=1}^{N}\Phi (T_{1}^{p_{1}(i)}x,\ldots
,T_{l}^{p_{l}(i)}x),  \label{4}
\end{equation}%
where $\Phi :X^{l}\rightarrow \mathbb{R}$. It is important to note that
characterizing the convergence (norm, almost everywhere) of (\ref{3}) or (\ref%
{4}) is extremely challenging (cf.\cite{assani2010pointwise,
bourgain1990double,  frantzikinakis2011some, host2005nonconventional,
krause2022pointwise}).

 Kifer \cite{kifer2010nonconventional, kifer2013strong} and Kifer-Varadhan 
\cite{kifer2014nonconventional} initially
considered the LDP problem for the multiple ergodic averages of the form 
\[
\frac{1}{N}\sum_{i=1}^{N}F \left( X(q_{1}(i)), \ldots, X(q_{l}(i)) \right)\text{,} 
\]%
where $X(i)$, $i\geq 0$ is a Markov process satisfying Doeblin's condition,
and $F$ is a locally H\"{o}lder continuous function with polynomial growth.
Later, Carinci et al. \cite{carinci2012nonconventional} addressed the LDP problem for the multiple average along arithmetic progressions below
on $\mathbb{N}$.
\[
\frac{1}{N}S_{N}(\{X_{i}\})=\frac{1}{N}%
\sum_{i=1}^{N}f_{1}(X_{i})f_{2}(X_{2i}) \cdots f_{l}(X_{li})
\]
and especially on the average, is of the form (\ref{5}).  In \cite%
{carinci2012nonconventional}, the \emph{rate function} associated with the
multiple ergodic average $S_{N}/N$ (\ref{5}) is defined by 
\begin{equation}
I_{r}(x)=\lim_{\epsilon \rightarrow 0}\lim_{N\rightarrow \infty }-\frac{1}{N}%
\log \mathbb{P}_{r}\left( \frac{S_{N}}{N}\rightarrow \lbrack x-\epsilon
,x+\epsilon ]\right) \text{.}  \label{1}
\end{equation}%
The authors prove that (\ref{1}) exists and satisfies the \emph{Fenchel-Legendre transform} of the free energy function as follows:
\[
I_{r}(x)=\sup_{\beta \in \mathbb{R}} \left\{ \beta x-F_{r}(\beta ) \right\} \text{,} 
\]
where $F_{r}(\beta )$ is the \emph{free energy function} defined in (\ref{6}%
). If $F_{r}(\beta )$ is differentiable, say $F_{r}^{\prime }(\eta )=y$, the
rate function can be clearly demonstrated as $I_{r}(y)=\eta y-F_{r}(\eta )$.
Therefore, the explicit formula for the free energy function is significant
for the rate function of the LDP problem.

Let $\mathcal{A}=\{+1,-1\}$ and $\mathbb{P}_{p}$ be the product of Bernoulli
with parameter $p$ on $\mathcal{A}$. For $\sigma \in \mathcal{A}^{\mathbb{Z}%
} $, the authors \cite{chazottes2014thermodynamic} investigate the
thermodynamic limit of the \emph{free energy function} associated with the
multiple ergodic average 
\begin{equation}
\frac{1}{N}S_{N}(\sigma )=\frac{1}{N}\sum_{i=1}^{N}\sigma _{i}\sigma _{2i},
\label{5}
\end{equation}%
defined as 
\begin{equation}
F_{r}(\beta )=\lim_{N\rightarrow \infty }\frac{1}{N}\log \mathbb{E}%
_{r}\left( e^{\beta S_{N}}\right) \text{.}  \label{6}
\end{equation}%
If we consider the multiple ergodic sum $S_{N}$ (\ref{5}) as a Hamiltonian
and the parameter $\beta $ as the `inverse temperature' in the lattice spin
system $\mathcal{A}^{\mathbb{Z}}$, this is the simplest version of the \emph{%
multiplicative Ising model} defined in \cite{chazottes2014thermodynamic}. In contrast to the Hamiltonian of the standard Ising model\footnote{The Hamiltonian is of the form $S_{N}(\sigma )=\sum_{i=1}^{N} \sigma_{i}\sigma _{i+1}$.}, such an interaction is much harder to process because it fails to be translation-invariant and is a long-range interaction (cf. 
\cite{carinci2012nonconventional, chazottes2014thermodynamic}). The
thermodynamic limit, entropy and existence of the Gibbs measure with respect
to multiple ergodic averages $S_{N}(\sigma )/N$ are shown in \cite{chazottes2014thermodynamic}.

A natural question arises: can we extend the above-mentioned LDP result to a multidimensional multiplicative Ising model? The question is interesting since multidimensional Ising models reveal rich and different phenomena on the thermodynamic limit and phase transition problems, compared to one-dimensional ones (cf.\cite{dembo2009ldp, georgii2011gibbs}). Recently, the LDP and explicit formula of the free energy function for the multiple ergodic average have been extended to $\mathbb{N}^{d}$, $d\geq 2$ \cite{ban2021large}, and the main purpose of this paper is to establish the LDP for a multiplicative Ising model on a general tree $\mathcal{T}$. We stress that although $\mathbb{N}^{d}$ and $%
\mathcal{T}$ are both multidimensional lattices, the methods of establishing the LDP for those lattices are quite different. The reason is that the concept of `amenability' is not true anymore for $\mathcal{T}$, that is, $\left\vert \mathcal{T}_{n}\right\vert /\left\vert \Delta _{n}\right\vert $ does not tend to $0$ as $n\rightarrow \infty $, where $\Delta _{n}$ (resp. $\mathcal{T%
}_{n}$) is the set of all vertices of $\mathcal{T}$ with distance from the
root being at most $n$ (resp. $=n$), and $\left\vert A\right\vert $ stands for the cardinality of the set $A$.

Let $\mathcal{T}$ be a \emph{Markov-Cayley tree}. Precisely, if we write $\mathcal{T}$ as a countable graph with a root $\epsilon $ and without loops, there exists an alphabet $\Sigma =\{1,\ldots ,d\}$, such that the set of the vertices of $\mathcal{T}_{n}\subseteq \Sigma ^{n}$  can be identified as the `admissible $n$-words' with respect to an \emph{adjacency matrix} $M$, that is, 
\[
\mathcal{T}_{n}= \left\{(x_{1},\ldots ,x_{n})\in \Sigma ^{n}:M(x_{i},x_{i+1})=1%
\text{ for }i=1,\ldots ,n-1 \right\}\text{.} 
\]
For instance, the adjacency matrix for the conventional $d$-tree is a full $%
d\times d$ matrix over $\{0,1\}$. In this paper, we focus on the broad class of trees, namely, the Markov-Cayley trees that satisfy the following property: 
\begin{equation}
\lim_{n\rightarrow \infty }\frac{\left\vert \mathcal{T}_{n+1}\right\vert }{%
\left\vert \mathcal{T}_{n}\right\vert }=\gamma >1\text{.}  \label{2}
\end{equation}%
This indicates that the vertices of the $n$-levels of $\mathcal{T}$ increase exponentially.  When $M$ is a square matrix, we denote by $\left\Vert M \right\Vert $ the matrix norm of $M$, i.e., $\left\Vert M\right\Vert = \mathbf{1}^{t}M\mathbf{1},$ $\mathbf{1}=(1,1,\ldots ,1)^t$. Since $\left\vert  \mathcal{T}_{n}\right\vert =\left\Vert M^{n-1}\right\Vert$, $n\geq 2$ for a Markov-Cayley tree $\mathcal{T}$, the condition (\ref{2}) is equivalent to 
\begin{equation}
\lim_{n\rightarrow \infty }\frac{\left\Vert M^{n+1}\right\Vert }{\left\Vert
M^{n}\right\Vert }=\gamma >1\text{.}    \label{add22}
\end{equation}%
Hence, we assume that $\mathcal{T}$ is a Markov-Cayley tree that satisfies (%
\ref{add22}) in what follows. It should be pointed out that condition (\ref{2})
is a general condition if T is not a Markov-Cayley tree. We emphasize that
the dynamics of the shift on a tree $\mathcal{T}$ (called \emph{tree-shift})
are quite interesting and have attracted a lot of attention
recently (cf.\cite{ban2017mixing, ban2021characterization, ban2021structure, 
ban2020complexity, ban2021entropy, ban2023entropy, PS-2017complexity,  petersen2020entropy}).

In this article, we consider a larger category of  the (nonconventional)
ergodic sums as well, namely, $S_{N}^{\bigstar }=\sum_{i=1}^{N}\sigma
_{i}\sigma _{ia(i)}$, where $\{a(i)\}_{i\geq 1}\subseteq \mathbb{N}$ with $%
a(i)\geq 2$ $\forall i\geq 1$. Let $\mathbb{P}_{p}$ be the product of
Bernoulli with the parameter $p$ on $\mathcal{A}=\{+1,-1\}$, we define the
associated \emph{free energy function} as follows:
\begin{equation}
F_{p}(\beta )=\limsup\limits_{N\rightarrow \infty }\frac{\log \mathbb{E}%
_{p}(e^{\beta S_{N}^{\bigstar }})}{\left\vert \Delta _{Na(N)-1}\right\vert }\text{,}  \label{one11}
\end{equation}
and the \emph{large deviation rate function} of the multiple average $%
S_{N}^{\bigstar }/\left\vert \Delta _{Na(N)-1}\right\vert $ on tree $\Delta _{Na(N)-1}$
is defined as 
\begin{equation}
I_{p}(x)=\lim_{\epsilon \rightarrow 0}\lim_{N\rightarrow \infty }-\frac{1}{%
\left\vert \Delta _{Na(N)-1}\right\vert }\log \mathbb{P}_{p}\left( \frac{%
S_{N}^{\bigstar }}{\left\vert \Delta _{Na(N)-1}\right\vert }\in \lbrack x-\epsilon
,x+\epsilon ]\right) \text{.}   \label{one12}
\end{equation}
In view of $S_{N}^{\bigstar }$, we deal with the following two types:

\textbf{I}. Let $a(i)=i$ for all $i\geq 1$. In this circumstance, the
ergodic sum is of the form 
\[S_{N}^{\#}=\sum_{i=1}^{N}\sigma _{i}\sigma
_{i^{2}} ,
\]
and here we remark that $a(1)=1$ is allowed and let $\sigma
_{1}\sigma _{1}=1$. The corresponding multiple average is $%
S_{N}^{\#}/\left\vert \Delta _{N^{2}-1}\right\vert $.

\textbf{II}. Let $a(i)=q$\textbf{\ }$\forall i\geq 1$ and $2\leq q\in 
\mathbb{N}$, the ergodic sum  is of the form 
\[S_{N}^{(q)}=\sum_{i=1}^{N}%
\sigma _{i}\sigma _{qi}  
\]
with the average $S_{N}^{(q)}/\left\vert \Delta_{qN-1}\right\vert $.

\vbox{}

For Type I, we have the following results.

\begin{thm}[LDP for Type I]
\label{Thm: 1}The following assertions hold true.

\begin{enumerate}
\item The explicit formula of the free energy function corresponding to the sum $%
S_{N}^{\#} $ is 
\[
F_{p}(\beta )=\frac{\gamma -1}{\gamma }\log \left[ pe^{\beta
}+(1-p)e^{-\beta }\right] +\mathcal{G}_{\#}(\beta )\text{,} 
\]%
where $\mathcal{G}_{\#}(\beta )$ is defined in \eqref{ssa2}.

\item The function $\beta \rightarrow F_{p}(\beta )$ is differentiable.

\item The multiple average $S_{N}^{\#}/\left\vert \Delta
_{N^{2}-1}\right\vert $ satisfies a large deviation principle with the rate
function 
\[
I_{p}(x)=\sup_{\beta \in \mathbb{R}}(\beta x-F_{p}(\beta ))\text{.} 
\]%
Moreover, if $\left( F_{p}\right) ^{\prime }(\eta )=y$, then $I_{p}(y)=\eta
y-F_{p}(\eta )$.
\end{enumerate}
\end{thm}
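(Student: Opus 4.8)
The plan is to obtain (1) by an exact evaluation of the moment generating function $\mathbb{E}_p\bigl(e^{\beta S_N^{\#}}\bigr)$, then to read off (2) by inspecting the resulting closed form, and to deduce (3) from (1)--(2) via the Gärtner--Ellis theorem.

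For (1), the key point is that $\mathbb{P}_p$ is a product measure, so $\mathbb{E}_p\bigl(e^{\beta S_N^{\#}}\bigr)$ can be computed by integrating out the spins one level of $\mathcal{T}$ at a time, starting from the outermost level of $\Delta_{N^2-1}$ and moving toward the root; the spins on vertices that do not occur in $S_N^{\#}$ integrate to $1$. Because the pairing in $S_N^{\#}$ is governed by the squaring map $i\mapsto i^{2}$, each such integration amounts to applying a fixed update to the effective single‑site weight carried by a vertex, the update being an explicit elementary function of $p$, $\beta$ and of how many descendants of the vertex are coupled to it — in effect a transfer‑matrix recursion built from the two weights $pe^{\beta}$ and $(1-p)e^{-\beta}$. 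Taking $\log$, dividing by $|\Delta_{N^2-1}|$, and using $|\mathcal{T}_n|=\|M^{n-1}\|$ together with $(\ref{add22})$, I would show the limit exists: since $|\mathcal{T}_n|\sim c\gamma^{n}$ and $|\mathcal{T}_n|/|\Delta_n|\to(\gamma-1)/\gamma$, the densest (outermost) levels dominate and account for the term $\frac{\gamma-1}{\gamma}\log\bigl[pe^{\beta}+(1-p)e^{-\beta}\bigr]$, while the geometrically sparser inner levels, which carry the longer multiplicative chains, assemble into the convergent correction series $\mathcal{G}_{\#}(\beta)$ of $(\ref{ssa2})$. In particular this shows that the $\limsup$ in $(\ref{one11})$ is a genuine limit, which is what Gärtner--Ellis requires.

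For (2) there is nothing dynamical left once (1) is established: $pe^{\beta}+(1-p)e^{-\beta}>0$ for every real $\beta$, so the first summand is (real‑analytic, hence) differentiable, and the explicit form of the summands of $(\ref{ssa2})$ obtained in (1) makes it transparent that the series and its termwise $\beta$‑derivative converge locally uniformly, so $F_p\in C^{1}(\mathbb{R})$. For (3), each prelimit function $\beta\mapsto|\Delta_{N^2-1}|^{-1}\log\mathbb{E}_p\bigl(e^{\beta S_N^{\#}}\bigr)$ is a normalised cumulant generating function, hence convex, so its pointwise limit $F_p$ is convex; by (1)--(2) it is finite and differentiable on all of $\mathbb{R}$, hence lower semicontinuous and essentially smooth (no steepness verification is needed because the effective domain is the whole line), and $0$ is interior to that domain. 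The Gärtner--Ellis theorem then applies to $S_N^{\#}/|\Delta_{N^2-1}|$ with speed $|\Delta_{N^2-1}|$ and yields the large deviation principle with good rate function $I_p=F_p^{*}=\sup_{\beta\in\mathbb{R}}(\beta x-F_p(\beta))$; in particular the limit in $(\ref{one12})$ exists and equals $I_p(x)$. Finally, if $F_p'(\eta)=y$, then $\beta\mapsto\beta y-F_p(\beta)$ is concave with vanishing derivative at $\beta=\eta$, so its supremum is attained there and $I_p(y)=\eta y-F_p(\eta)$.

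The main obstacle is step (1): correctly tracking how the squaring map organises the vertices of $\Delta_{N^2-1}$ into coupled blocks, carrying out the level‑by‑level elimination so that the single‑site update closes in a controllable finite‑dimensional form, and proving that after normalisation by $|\Delta_{N^2-1}|$ the contributions converge, with the inner‑level part summing to the convergent series $\mathcal{G}_{\#}$. Once this explicit formula is secured, (2) and the Gärtner--Ellis step in (3) are routine.
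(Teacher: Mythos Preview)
Your outline for (2) and (3) matches the paper's argument almost exactly (local uniform convergence of the termwise derivative series via an $M$-test, then the G\"artner--Ellis theorem). The divergence is entirely in step (1), and there your plan misidentifies what $\mathcal{G}_{\#}(\beta)$ is.

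The paper does \emph{not} run a transfer-matrix recursion through ``longer multiplicative chains''. Its first move (Proposition~\ref{two11}) is to show that the sub-sum $\sum_{i\le\lfloor\sqrt{N}\rfloor}\sigma_i\sigma_{i^2}$ contributes $o(|\Delta_{N^2-1}|)$ to the log--moment generating function and may be discarded outright. Once only indices $\lfloor\sqrt{N}\rfloor< k\le N$ remain, one has $k^2>N$, so no vertex at level $k^2-1$ is itself a base spin, and since $(\lfloor\sqrt{N}\rfloor+1)^2>N$ no surviving base level $k-1$ is a target either. The remaining interaction therefore decomposes into \emph{disjoint, independent} tree blocks $\mathcal{T}_b(k-1,k^2-1,R(k^2-k+1,j),j)$, each consisting of a single base spin coupled to its $R(k^2-k+1,j)$ descendants at one outer level, and the expectation factorizes exactly as the product~\eqref{sdf1}. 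No chain of length greater than two survives.

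Consequently your attribution of $\mathcal{G}_{\#}$ to ``the geometrically sparser inner levels, which carry the longer multiplicative chains'' is wrong: those chains are thrown away by Proposition~\ref{two11} and contribute nothing to $F_p$. Both the leading term $\frac{\gamma-1}{\gamma}\log[pe^\beta+(1-p)e^{-\beta}]$ and $\mathcal{G}_{\#}$ come from the \emph{same} independent two-level blocks. Writing each block expectation as $E_j^{+}+E_j^{-}=E_j^{+}\bigl(1+E_j^{-}/E_j^{+}\bigr)$ with $E_j^{\pm}$ as in~\eqref{e11}, the factor $R(k^2-k+1,j)\log[pe^\beta+(1-p)e^{-\beta}]$ inside $\log E_j^{+}$, summed via~\eqref{e78}, yields the leading term; $\mathcal{G}_{\#}$ is simply the residual $\log(1+E_j^{-}/E_j^{+})$, i.e.\ the correction coming from the base spin taking the value $-1$ rather than $+1$.

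Your level-by-level elimination could in principle be salvaged, but for every base $k>\lfloor\sqrt{N}\rfloor$ the recursion closes after a single step and just reproduces the block factorization above, and you would still need a separate argument that the genuinely longer chains (bases $k\le\lfloor\sqrt{N}\rfloor$) are negligible---which is exactly Proposition~\ref{two11}. So the transfer-matrix layer adds complexity without payoff, and the mislocation of $\mathcal{G}_{\#}$ would have you searching for it in a part of the tree that does not contribute at all.
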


We note that if $p=\frac{1}{2}$, it can be easily computed that $\mathcal{G}%
_{\#}(\beta )=0$ and 
\begin{equation}
F_{\frac{1}{2}}(\beta )=\frac{\gamma -1}{\gamma }\log \left[ \frac{1}{2}\left(
e^{\beta }+e^{-\beta }\right) \right] .  \label{9}
\end{equation}%
Meanwhile, the rate function can be computed rigorously as 
\[
I_{\frac{1}{2}}(y)=\frac{\gamma -1}{\gamma }\left( \eta \frac{e^{\eta
}-e^{-\eta }}{e^{\eta }+e^{-\eta }}-\log \frac{e^{\eta }+e^{-\eta }}{2}%
\right) \  \text{with}  \  \  y =  \frac{\gamma -1}{\gamma } \cdot \frac{e^{\eta
}-e^{-\eta }}{e^{\eta }+e^{-\eta }} .
\]%

In particular, if $a(i)=i^{\alpha -1}$ for any $i\geq 1$ and a given integer $\alpha \geq 2$, the sum $S_{N}^{\#}(\alpha )=\sum_{i=1}^{N}\sigma
_{i}\sigma _{i^{\alpha }}$ is the generalized case of $S_{N}^{\#}$, and Corollary \ref{co12} below demonstrates that the free energy function $F_{p}(\beta )$ is a constant function for all $\alpha \geq 2$ and $p=1/2$.

\begin{coro}\label{co12}
For $p=1/2$, the free energy function of $F_{p}(\beta )$ with respect to the
ergodic sum $S_{N}^{\#}(\alpha )$ equals (\ref{9}) for all $\alpha \geq 2$.
\end{coro}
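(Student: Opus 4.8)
The plan is to run the argument behind Theorem~\ref{Thm: 1} with $i\mapsto i^{\alpha}$ in place of $i\mapsto i^{2}$, exploiting the collapse that happens at $p=\tfrac12$. At $p=\tfrac12$ the spins are i.i.d.\ uniform on $\{\pm1\}$ and, as recorded right after Theorem~\ref{Thm: 1}, the correction $\mathcal{G}_{\#}(\beta)$ vanishes; so the corollary amounts to showing that the ``main term'' of the free energy is $\frac{\gamma-1}{\gamma}\log\cosh\beta$ for \emph{every} $\alpha\ge2$, i.e.\ that the value \eqref{9} persists once the normalization $|\Delta_{N^{\alpha}-1}|$ is inserted.

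First I would pin down the geometry of the interaction. The orbits of $i\mapsto i^{\alpha}$ on $\{2,3,\dots\}$ are the chains $r,r^{\alpha},r^{\alpha^{2}},\dots$ indexed by the $r\ge2$ that are not perfect $\alpha$-th powers, and they partition $\{2,3,\dots\}$: every $m\ge2$ is uniquely $m=r^{\alpha^{j}}$ with $r$ not an $\alpha$-th power. Hence the pairs $\{i,i^{\alpha}\}$, $1\le i\le N$, lift on $\mathcal{T}$ to a union of \emph{vertex-disjoint} subtrees (one ``fattened path'' per root $r$, carrying the ancestor-descendant multiplicities of $\mathcal{T}$) together with the constant $\sigma_{1}\sigma_{1}=1$ from $i=1$; in particular the lifted interaction graph is a forest. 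At $p=\tfrac12$ the substitution $\tau_{e}=\sigma_{a(e)}\sigma_{b(e)}$ along the edges $e$ of this forest produces i.i.d.\ uniform $\pm1$ variables, whence
\[
\mathbb{E}_{1/2}\!\left(e^{\beta S_{N}^{\#}(\alpha)}\right)=e^{\beta}\,(\cosh\beta)^{E_{N}(\alpha)},\qquad E_{N}(\alpha):=\sum_{i=2}^{N}|\mathcal{T}_{i^{\alpha}-1}|,
\]
where the interaction with label $i$ contributes one edge per vertex of $\mathcal{T}$ at level $i^{\alpha}-1$, each such vertex having a unique ancestor at level $i-1$.

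It remains to take the limit. Since $(N-1)^{\alpha}-1<N^{\alpha}-1$ for $N$ large and $|\mathcal{T}_{n+1}|/|\mathcal{T}_{n}|\to\gamma>1$ by \eqref{add22}, the last summand dominates, so $E_{N}(\alpha)=(1+o(1))\,|\mathcal{T}_{N^{\alpha}-1}|$; and since $a_{n+1}/a_{n}\to\gamma>1$ forces $a_{n}/\sum_{k\le n}a_{k}\to1-\tfrac1\gamma$, applied with $a_{n}=|\mathcal{T}_{n}|=\lVert M^{n-1}\rVert$, one gets $|\mathcal{T}_{N^{\alpha}-1}|/|\Delta_{N^{\alpha}-1}|\to\frac{\gamma-1}{\gamma}$. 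Therefore the $\limsup$ in \eqref{one11} is a true limit and
\[
F_{1/2}(\beta)=\lim_{N\to\infty}\frac{\beta+E_{N}(\alpha)\log\cosh\beta}{|\Delta_{N^{\alpha}-1}|}=\frac{\gamma-1}{\gamma}\log\cosh\beta,
\]
which is \eqref{9}, for all $\alpha\ge2$.

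The main obstacle I foresee is the bookkeeping behind $E_{N}(\alpha)$: one must lift the interaction to the Markov-Cayley tree in exact accordance with the definition of $S_{N}^{\bigstar}$, verify that distinct chains stay vertex-disjoint so that the forest structure --- and hence the clean product $(\cosh\beta)^{E_{N}(\alpha)}$ --- is not disturbed by the admissibility constraints imposed by $M$, and control all the lower-order chains uniformly in $N$ using only the ratio hypothesis \eqref{add22} (and not finer spectral data on $M$), so that the $\alpha$-dependence genuinely cancels against $|\Delta_{N^{\alpha}-1}|$.
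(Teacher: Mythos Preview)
Your argument is correct and takes a genuinely different route from the paper. The paper follows the template of Theorem~\ref{Thm: 1}: it first invokes the analogue of Proposition~\ref{two11} to discard the indices $i\le\lfloor N^{1/\alpha}\rfloor$, so that the remaining tree blocks $\mathcal{T}_b(k-1,k^{\alpha}-1,R(k^{\alpha}-k+1,j),j)$ are literally independent, factors the expectation accordingly, and then at $p=\tfrac12$ observes $\log(1+\bar E_j^{-}/\bar E_j^{+})=\log 2$, collapses via the identity $\|M^{k^{\alpha}-2}\|=\sum_j R(k^{\alpha}-k+1,j)C(k-1,j)$, and passes to the limit. You instead keep the full sum and exploit that the lifted interaction graph is a forest (each vertex has at most one ``upward'' edge since its level lies in exactly one orbit of $i\mapsto i^{\alpha}$), so that at $p=\tfrac12$ the edge spins $\tau_e=\sigma_{a(e)}\sigma_{b(e)}$ are i.i.d.\ uniform and the expectation factors exactly as $e^{\beta}(\cosh\beta)^{E_N(\alpha)}$; the limiting ratio $E_N(\alpha)/|\Delta_{N^{\alpha}-1}|\to(\gamma-1)/\gamma$ then follows from \eqref{add22} as you indicate.

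Your approach is shorter and more conceptual for this corollary, and it makes transparent \emph{why} $\alpha$ drops out: the edge count is dominated by the single level $N^{\alpha}-1$, whose weight relative to $|\Delta_{N^{\alpha}-1}|$ is always $(\gamma-1)/\gamma$. The paper's approach, by contrast, stays inside the machinery built for general $p$ (where your forest trick fails because edge products are no longer independent), so it transfers directly from the proof of Theorem~\ref{Thm: 1}. One small wording point: you write ``one fattened path per root $r$'', but in fact each chain root $r$ contributes $|\mathcal{T}_{r-1}|$ disjoint components, one for every vertex at level $r-1$; this does not affect the argument, since all you need is that the interaction graph is a forest.
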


For Type II, we have the following results.

\begin{thm}[LDP for Type II]   \label{Thm: 2}
The following assertions hold true.

\begin{enumerate}
\item The free energy function corresponding to the sum $S_{N}^{(q)}$ is 
\[
F_{p}(\beta )=\frac{\gamma ^{q-1}\left( \gamma -1\right) }{\gamma ^{q}-1}%
\log \left[ pe^{\beta }+(1-p)e^{-\beta }\right] +\mathcal{G}_{q}(\beta )%
\text{,} 
\]%
where $\mathcal{G}_{q}(\beta )$ is given by \eqref{ssa1}.

\item The function $\beta \rightarrow F_{p}(\beta )$ is differentiable.

\item The multiple average $S_{N}^{(q)}/\left\vert \Delta _{qN-1}\right\vert 
$ defined on $\mathcal{T}$ satisfies a large deviation principle with the
rate function 
\[
I_{p}(x)=\sup_{\beta \in \mathbb{R}}(\beta x-F_{p}(\beta ))\text{.} 
\]%
Moreover, if $\left( F_{p}\right) ^{\prime }(\eta )=y$, then $I_{p}(y)=\eta
y-F_{p}(\eta )$.
\end{enumerate}
\end{thm}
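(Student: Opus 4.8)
The plan is to mimic the strategy already used for Type I (Theorem \ref{Thm: 1}), adapting the combinatorial bookkeeping to the scaling $a(i)=q$. First I would decompose the ergodic sum $S_N^{(q)}=\sum_{i=1}^N\sigma_i\sigma_{qi}$ according to which vertices of the tree $\Delta_{qN-1}$ are actually ``touched'' by a pair $(i,qi)$. The key structural observation is that the map $i\mapsto qi$ partitions $\{1,\dots,qN\}$ into disjoint chains $\{j, qj, q^2 j,\dots\}$ indexed by the integers $j$ not divisible by $q$, so that $\exp(\beta S_N^{(q)})$ factorizes over these chains. Taking $\mathbb{E}_p$ and using independence of the Bernoulli spins, $\mathbb{E}_p(e^{\beta S_N^{(q)}})$ becomes a product of transfer-matrix-type factors $M_\beta=\bigl(\begin{smallmatrix} pe^\beta & p e^{-\beta}\\ (1-p)e^\beta & (1-p)e^{-\beta}\end{smallmatrix}\bigr)$ (or its symmetrized version), one factor per edge $\sigma_{q^{k}j}\sigma_{q^{k+1}j}$ in each chain, together with a contribution $pe^\beta+(1-p)e^{-\beta}$ for the head of each chain and a trivial contribution for spins never paired. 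Counting how many vertices lie at each ``chain-depth'' then reduces everything to a geometric-series computation governed by $\gamma=\lim\|M^{n+1}\|/\|M^n\|$, and $|\Delta_{qN-1}|\sim c\,\gamma^{qN}$ by \eqref{add22}; this is where the factor $\frac{\gamma^{q-1}(\gamma-1)}{\gamma^q-1}$ emerges, namely as $\sum_{k\ge 0}\gamma^{-qk}\cdot(\gamma-1)\gamma^{-1}$ reindexed appropriately, and the residual bounded corrections assemble into $\mathcal{G}_q(\beta)$ defined in \eqref{ssa1}.

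The second step is to show $\beta\mapsto F_p(\beta)$ is differentiable. The first summand is manifestly real-analytic in $\beta$, so it suffices to establish that $\mathcal{G}_q(\beta)$ is $C^1$. Since $\mathcal{G}_q$ will be exhibited as a uniformly convergent series (or limit) of analytic functions — each term a logarithm of a positive analytic function bounded away from $0$ and $\infty$ on compact $\beta$-intervals — I would verify that the term-by-term derivatives converge uniformly on compacta, which yields differentiability of the limit. The point is that unlike the standard nearest-neighbour Ising model on a tree, here the ``interaction graph'' is a disjoint union of finite paths of geometrically shrinking total weight, so no phase transition (non-differentiability) can occur; this is the conceptual payoff advertised in the abstract.

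The third step, the LDP with rate function $I_p(x)=\sup_\beta(\beta x-F_p(\beta))$, then follows from the Gärtner–Ellis theorem: once $F_p$ is shown to exist as a genuine limit (not merely a $\limsup$) and to be differentiable — hence essentially smooth and lower semicontinuous — Gärtner–Ellis gives the full LDP for $S_N^{(q)}/|\Delta_{qN-1}|$ with good rate function the Legendre–Fenchel transform of $F_p$, and the final assertion $I_p(y)=\eta y-F_p(\eta)$ when $F_p'(\eta)=y$ is the standard duality identity for the Legendre transform of a differentiable convex function. I expect the main obstacle to be the first step: carefully organizing the chain decomposition and the vertex counts at each depth so that the error terms genuinely vanish after dividing by $|\Delta_{qN-1}|$, and identifying the accumulated bounded part precisely as $\mathcal{G}_q(\beta)$ — in particular handling boundary chains near level $qN$ that are truncated, and confirming that upgrading $\limsup$ to $\lim$ is legitimate. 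The differentiability and Gärtner–Ellis steps should be comparatively routine given the explicit product structure.
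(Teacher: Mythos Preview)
Your chain decomposition with a $2\times 2$ transfer matrix is the right machinery for the multiplicative Ising model on $\mathbb{N}$, but it does not carry over to the tree model in this paper. Here the symbol $\sigma_i\sigma_{qi}$ abbreviates a sum over \emph{all} ancestor--descendant pairs between levels $i-1$ and $qi-1$: a single vertex $v$ at level $i-1$ ending in the letter $j$ interacts with each of its $R((q-1)i+1,j)$ descendants at level $qi-1$ (Lemma~\ref{three12}), and this count grows like $\gamma^{(q-1)i}$. Hence along any ``chain of levels'' $\{j,qj,q^{2}j,\dots\}$ the interaction graph branches at every step and is a tree, not a path; no product of $2\times 2$ matrices computes its expectation, and your assertion that ``the interaction graph is a disjoint union of finite paths'' is false for this model. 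One would need a genuine tree recursion, which reintroduces exactly the difficulties the paper is designed to avoid.

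The paper bypasses this entirely via a truncation argument (Proposition~\ref{three11}, the Type~II analogue of Proposition~\ref{two11}): the contribution from indices $i\le\lfloor N/q\rfloor$ is $o(|\Delta_{qN-1}|)$ because $|\Delta_{N-1}|/|\Delta_{qN-1}|\to 0$ under~\eqref{add22}. After truncating, every remaining term $\sigma_k\sigma_{qk}$ has $qk>N$, so the spins at level $qk-1$ appear in exactly one potential and the expectation factors exactly as $\prod_{k}\prod_{j}(F_j^{+}+F_j^{-})^{C(k-1,j)}$ with $F_j^{\pm}$ as in~\eqref{e789}; the coefficient $\gamma^{q-1}(\gamma-1)/(\gamma^{q}-1)$ then drops out of $\limsup_N\sum_{k=\lfloor N/q\rfloor+1}^{N}\|M^{qk-2}\|/|\Delta_{qN-1}|$ via Remark~\ref{three13}. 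Your outline for the differentiability of $\mathcal{G}_q$ (uniform convergence of termwise derivatives, Weierstrass M-test) and the final appeal to G\"artner--Ellis matches the paper's Steps~2 and~3; it is the combinatorial core of Step~1 that needs to be replaced by the truncation-plus-tree-block argument above.
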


We remark that if $p=\frac{1}{2}$, it can be easily seen that $\mathcal{G}%
_{q}(\beta )=0$ and 
\[
F_{\frac{1}{2}}(\beta )=\frac{\gamma ^{q-1}\left( \gamma -1\right) }{\gamma
^{q}-1}\log \left[ \frac{1}{2}\left( e^{\beta }+e^{-\beta }\right) \right] 
\text{.}
\]%
Thus, the rate function is 
\[
I_{\frac{1}{2}}(y )=\frac{\gamma ^{q-1}\left( \gamma -1\right) }{\gamma
^{q}-1}\left( \eta \frac{e^{\eta }-e^{-\eta }}{e^{\eta }+e^{-\eta }}-\log 
\frac{e^{\eta }+e^{-\eta }}{2}\right) \text{,}
\]%
where 
\[
y=\frac{\gamma ^{q-1}\left( \gamma -1\right) }{\gamma ^{q}-1}\left( \frac{%
e^{\eta }-e^{-\eta }}{e^{\eta }+e^{-\eta }}\right) \text{.}
\]

\vbox{}

For $q=2$, the Ising sum $S_{N}^{(2)}$ is considered as follows when $\mathcal{T}$ is a $d$-tree or the golden-mean tree (defined in Section 2). The formula for $F_{p}(\beta )$ and $\mathcal{G}_{2}(\beta )$ can
be computed explicitly.

\begin{ex}\label{core12}
The following assertions hold true.

\noindent \textbf{1.} For $d \geq 2$, let $\mathcal{T}$ be a conventional $d$-tree, then the matrix $M$ is the full matrix. This means that $R(k+1,j) = d^k$ and $C(k-1,j) = d^{k-2}$  (which are defined in Section 2 and represent the row sum and column sum, respectively)  for all $k \geq 1$ and $1 \leq j \leq d$. From Theorem \ref{Thm: 2}, the free energy function associated with the sum $S_{N}^{(2)}$ is given by
\[
F_p (\beta) = \frac{d}{d+1} \log \left[pe^{\beta} + (1-p)e^{-\beta}\right] + \mathcal{G}_{2}(\beta)
\]
with 
\[
\mathcal{G}_{2}(\beta) = \limsup_{N \to \infty} \frac{d-1}{d}  \!\sum_{k= \lfloor \!N/2 \rfloor \!+ \!1}^{N} \frac{1}{d^{2N\!-k}} \log \left[ 1 + \frac{1\!-\!p}{p} \left( \frac{(1\!-\!p)e^{\beta} + pe^{-\beta}}{pe^{\beta} + (1\!-\!p)e^{-\beta}}  \right)^{d^k} \right] .
\]

\noindent \textbf{2.}  Suppose that $\mathcal{T}$ is the golden-mean tree,  then it satisfies $\gamma = \frac{\sqrt{5} + 1}{2}$ and the associated transition matrix $M$ whose rows are $11$, $10$. Then we have $R(k\!+\!1,j) = (M^k)_{j,1} + (M^k)_{j,2}$ and $C(k\!-\!1,j) = (M^{k-2})_{1,j} + (M^{k-2})_{2,j}$ for all $k \geq 1$ and $j = 1,2$.  According to Theorem \ref{Thm: 2}, the free energy function related to the sum  $S_{N}^{(2)}$ is obtained as
\[
F_p (\beta) = \frac{\sqrt{5} - 1}{2} \log \left[pe^{\beta} + (1-p)e^{-\beta}\right] + \mathcal{G}_{2}(\beta) ,
\]
where
\[
\mathcal{G}_{2} (\beta)  = \limsup_{N \to \infty}  \sum_{k= \lfloor \!N/2 \rfloor + 1}^{N} \sum_{j=1}^{2} \frac{C(k\!-\!1,j)}{|\Delta_{2N-1}|}  \log \left(1 + \frac{E_{j}^{-}}{E_{j}^{+}} \right) .
\]
\end{ex}

\vbox{}

These results are far from conclusive, and we end this section by pointing out the following problem.

\begin{prob}
Is it possible to address the LDP problem for a multiplicative Ising model on a Markov-Cayley tree $\mathcal{T}$ with respect to the multiple ergodic average 
\[
\frac{1}{N}S_{N}(\sigma )=\frac{1}{N}\sum_{n=1}^{N}\sigma _{a(n)}\sigma
_{b(n)},
\]
where $a(n)=pn+c$ and $b(n)=qn+d$, where $\gcd(p,q)=1$ and $c,b\in \mathbb{Z}$?
\end{prob}

\vbox{}

\section{Preliminaries}

  Let $\mathcal{T}$ be the tree, as it can be also viewed as the set of all finite words on a finite alphabet $\Sigma$. Every word $u$ on $\Sigma$ corresponds to  a vertex of $\mathcal{T}$, with the empty word $\epsilon$ corresponding to the root. Given $\nu \in \mathcal{T}$, we define the length of the unique path from $\epsilon$ to $\nu$ as $|\nu|$.  Then, for each $n \geq 0$ the set of vertices of the $n$-th level of $\mathcal{T}$ is denoted by $\mathcal{T}_n = \left\{ \nu \in \mathcal{T} : |\nu|=n \right\}$ and the $n$-th subtree of $\mathcal{T}$ is defined by $\Delta_n = \bigcup_{i=0}^{n} \mathcal{T}_i$, which implies all vertices in $\Delta_n$ whose distance from $\epsilon$ is at most $n$. Given a vertex $\nu \in \mathcal{T}$,  denote by $e(\nu)$ the number of edges of $\nu$. For $d \geq 2$,  a tree $\mathcal{T}$ is called the $d$-tree if  $e(\epsilon) = d$ and $e(\nu) = d+1$ for all $\nu \neq \epsilon$. Then it can be checked that  $|\Delta_n| = 1 + d + \cdots + d^n$ for the $d$-tree.

Let $\Sigma = \{1,2,\cdots,d\}$ and $M = [m_{i,j}]$ be a $d \times d$ essential 0-1 transition matrix.  A matrix is \emph{essential} if it has no zero rows and zero columns.  Denote by $\Sigma^{*}_{M}$ the collection of all finite words generated by $M$ together with the empty word $\epsilon$. Then the Markov-Cayley tree is defined as $\mathcal{T} = \Sigma^{*}_{M}$, which implies that 
each node of $\mathcal{T}$ is a finite word in $\Sigma^{*}_{M}$. And we have $|\mathcal{T}_1| = d$ and $|\mathcal{T}_n| = \|M^{n-1}\|$ for $n \geq 2$. It is clear to see that
\[
|\Delta_n| = 1 + d + \sum_{j=2}^{n} \left\Vert M^{j-1} \right\Vert .
\]

For example, if we consider $\Sigma = \{1,2\}$ and the matrix $M = \begin{bmatrix}
1 & 1  \\  1 & 0  \end{bmatrix}$, the collection of finite words corresponding to $M$ with the empty word $\epsilon$ can be obtained as $\Sigma^{*}_{M} = \{ \epsilon,1,2,11,12,21,111,112,121,211,212,\cdots \}$, as shown in Figure \ref{fig1}.  Clearly, the golden-mean tree is also an expandable tree with $\gamma = \frac{\sqrt{5} + 1}{2}$.  However, there are trees that are not Markov-Cayley, but they meet the condition \eqref{2},  e.g. the exploding tree in  \cite{benjamini1994markov}.

\begin{figure}[htbp]
\includegraphics[scale=0.65]{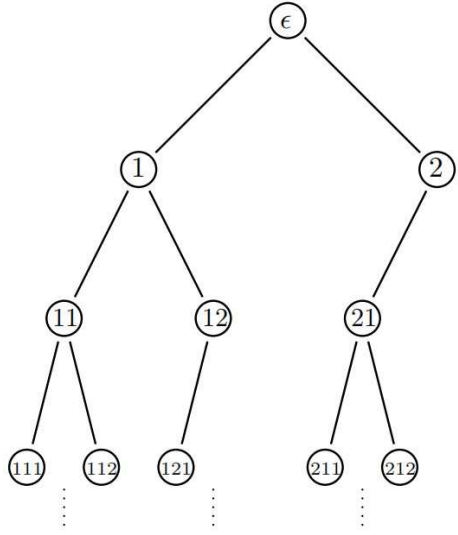}
\caption{Golden-mean tree.}   \label{fig1}
\end{figure}

Let $\mathcal{T} = \Sigma_{M}^*$, where $M$ is a $d \times d$ essential binary matrix.  For $k \geq 2$ and $1 \leq i \leq d$, the number of nodes (i.e. finite words) beginning with `$i$'
on the $k$-th level of $\mathcal{T}$ is equal to the \emph{row sum} 
\[
R(k,i) = \sum_{j = 1}^{d} (M^{k-1})_{i,j}
\]
of the matrix $M^{k-1}$. Similarly, for $1 \leq j \leq d$ the number of nodes (i.e. finite words) ending with `$j$' on the $k$-th level of $\mathcal{T}$ is equal to the \emph{column sum} 
\[
C(k,j) = \sum_{i = 1}^{d} (M^{k-1})_{i,j}
\]
of the matrix $M^{k-1}$.
Notably, the row sum $R(k,i)$ is essentially the number of nodes (i.e. finite words which have length $k$) formed by the symbol `$i$' through $k-1$ steps from the first level  to the $k$-th level on $\mathcal{T}$. In other words, $R(k,i)$ is also the number of paths formed by this symbol `$i$' from the $1$-st level  to the $k$-th level on $\mathcal{T}$.

\vbox{}

In the following, we consider spin systems with Ising $\pm 1$ spins on a Markov-Cayley tree $\mathcal{T}$.  Let $\mathbb{P}_p$ be the product of Bernoulli with the parameter $p$ on two symbols $\mathcal{A} = \{+1,-1\}$. A \emph{configuration} $\sigma$ is an element of the space $\mathcal{A}^{\mathcal{T}}$. We study the \emph{ergodic sum}  $S_{N}^{\bigstar} = \sum_{i=1}^{N} \sigma_i \sigma_{i a(i)}$
defined on $\mathcal{T}$ as the ``Hamiltonian", where $\{ a(i) \}_{i=1}^{\infty}$ is an integer sequence with $a(i) \geq 2$ and $\sigma_{i} \in \mathcal{T}_{i -1}$ for all $i \geq 1$. 
The following theorem shows the connection between the rate function and the free energy function.

\begin{thm}\label{th01}
\textnormal{(G\"artner-Ellis theorem \cite{dembo2009ldp})} 
If the limit \eqref{one11} exists and such limit is finite in a neighborhood of origin, then the rate function \eqref{one12} is equal to the Fenchel-Legendre transform of \eqref{one11}, namely 
\[
I_p (x) = \sup_{\beta \in \mathbb{R}} \left( \beta x - F_p (\beta)  \right) .
\]
Furthermore, if  $F_p (\beta)$ is a differentiable function, written as $ (F_p)'(\eta) =y$, then we have
\[
I_p (y) = \eta y -  F_p (\eta)  .
\]
\end{thm}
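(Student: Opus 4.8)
The plan is to run the classical proof of the G\"artner--Ellis theorem (see \cite{dembo2009ldp}), specialized to the random variable $Z_N:=S_N^{\bigstar}/v_N$ with normalization $v_N:=|\Delta_{Na(N)-1}|$, which tends to $\infty$ because $\gamma>1$ and $Na(N)-1\to\infty$. First I would record the elementary structure of $F_p$: for each fixed $N$ the map $\beta\mapsto v_N^{-1}\log\mathbb{E}_p(e^{\beta S_N^{\bigstar}})$ is convex by H\"older's inequality and vanishes at $\beta=0$; since a pointwise $\limsup$ of convex functions is convex (by subadditivity of $\limsup$), the function $F_p$ of \eqref{one11} is convex with $F_p(0)=0$, and by the standing hypothesis it is finite in a neighborhood of the origin. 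Hence its Fenchel--Legendre transform $F_p^{*}(x):=\sup_{\beta\in\mathbb{R}}(\beta x-F_p(\beta))$ is a nonnegative, convex, lower semicontinuous function with $F_p^{*}(m)=0$ at $m:=F_p'(0)$, and it is the candidate rate function.

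For the lower bound on $I_p$ (that is, an exponential \emph{upper} bound on the probability) I would use the exponential Chebyshev inequality: for $\beta\ge 0$ one has $\mathbb{P}_p(Z_N\ge x-\epsilon)\le e^{-v_N\beta(x-\epsilon)}\mathbb{E}_p(e^{\beta S_N^{\bigstar}})$, and symmetrically a left-tail bound for $\beta\le 0$. Bounding $\mathbb{P}_p(Z_N\in[x-\epsilon,x+\epsilon])$ by the appropriate one of the two tails (the right one when $x\ge m$, the left one when $x\le m$), taking $-v_N^{-1}\log$, letting $N\to\infty$ via \eqref{one11}, optimizing over the admissible sign of $\beta$, and then letting $\epsilon\to0$ gives
\[
\lim_{\epsilon\to0}\lim_{N\to\infty}-\tfrac{1}{v_N}\log\mathbb{P}_p\!\left(Z_N\in[x-\epsilon,x+\epsilon]\right)\ \ge\ F_p^{*}(x);
\]
here one uses that, by convexity of $F_p$ with $F_p'(0)=m$, for $x\ge m$ the concave function $\beta\mapsto\beta x-F_p(\beta)$ attains its maximum over $\beta\ge0$ and that maximum is $F_p^{*}(x)$ (and symmetrically for $x\le m$).

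For the matching lower bound on the probability I would perform an exponential change of measure. Since $F_p$ is differentiable (part (2) of Theorems \ref{Thm: 1} and \ref{Thm: 2}), pick $\eta$ with $F_p'(\eta)=x$ and set $d\mathbb{Q}_N:=e^{\eta S_N^{\bigstar}}\big/\mathbb{E}_p(e^{\eta S_N^{\bigstar}})\,d\mathbb{P}_p$. The limiting logarithmic moment generating function of $Z_N$ under $\mathbb{Q}_N$ is $t\mapsto F_p(\eta+t)-F_p(\eta)$, which is differentiable at $t=0$ with slope $F_p'(\eta)=x$; applying the Chebyshev estimate of the previous step under $\mathbb{Q}_N$ then forces $Z_N\to x$ in $\mathbb{Q}_N$-probability, so $\mathbb{Q}_N(Z_N\in[x-\epsilon,x+\epsilon])\to1$. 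On the event $\{Z_N\in[x-\epsilon,x+\epsilon]\}$ the Hamiltonian $S_N^{\bigstar}=v_N Z_N$ lies within $v_N\epsilon$ of $v_N x$, so unwinding the Radon--Nikodym derivative gives
\[
\mathbb{P}_p\!\left(Z_N\in[x-\epsilon,x+\epsilon]\right)\ \ge\ e^{-v_N(\eta x+|\eta|\epsilon)}\,\mathbb{E}_p(e^{\eta S_N^{\bigstar}})\,\mathbb{Q}_N\!\left(Z_N\in[x-\epsilon,x+\epsilon]\right),
\]
whence $-v_N^{-1}\log\mathbb{P}_p(Z_N\in[x-\epsilon,x+\epsilon])\le \eta x+|\eta|\epsilon-v_N^{-1}\log\mathbb{E}_p(e^{\eta S_N^{\bigstar}})-v_N^{-1}\log\mathbb{Q}_N(Z_N\in[x-\epsilon,x+\epsilon])$. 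Letting $N\to\infty$ and then $\epsilon\to0$ gives $I_p(x)\le\eta x-F_p(\eta)\le F_p^{*}(x)$, which combined with the previous paragraph yields $I_p(x)=F_p^{*}(x)=\sup_{\beta\in\mathbb{R}}(\beta x-F_p(\beta))$ and, in passing, the existence of the limits defining \eqref{one12}. Finally, when $F_p$ is differentiable the concave function $\beta\mapsto\beta y-F_p(\beta)$ has derivative $y-F_p'(\beta)$, which vanishes exactly at $\beta=\eta$ whenever $F_p'(\eta)=y$; by concavity the supremum is attained there, so $I_p(y)=\eta y-F_p(\eta)$.

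I expect the change-of-measure step to be the main obstacle: rigorously establishing the concentration $Z_N\to x$ under $\mathbb{Q}_N$ requires $F_p$ to be a genuine limit (not merely the $\limsup$ written in \eqref{one11}) and to be finite throughout a neighborhood of $\eta$, which is exactly what the differentiability of $F_p$ proved in Theorems \ref{Thm: 1}(2) and \ref{Thm: 2}(2) supplies; the same essential smoothness also disposes of the exposed-point subtleties that make the general G\"artner--Ellis upper bound for arbitrary closed sets delicate. A secondary bookkeeping point is to verify that the $\limsup$ in \eqref{one11} is an honest limit on the range of $\beta$ used above, which the explicit formulas of Theorems \ref{Thm: 1} and \ref{Thm: 2} make transparent.
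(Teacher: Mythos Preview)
The paper does not prove Theorem~\ref{th01}: it is stated as a named background result (the G\"artner--Ellis theorem) with a citation to \cite{dembo2009ldp}, and immediately afterwards the paper says ``The rest of this paper is devoted to proofs of Theorems~\ref{Thm: 1} and~\ref{Thm: 2}.'' So there is no ``paper's own proof'' to compare against; the authors simply quote the theorem and use it as a black box in part~(3) of Theorems~\ref{Thm: 1} and~\ref{Thm: 2}.

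What you have written is a faithful sketch of the classical G\"artner--Ellis argument as it appears in \cite{dembo2009ldp}: the Chebyshev/Markov exponential bound for the upper estimate, and the Cram\'er tilt $d\mathbb{Q}_N\propto e^{\eta S_N^{\bigstar}}\,d\mathbb{P}_p$ together with concentration under the tilted law for the lower estimate. The argument is essentially correct, and you have correctly flagged the two genuine technical points: (i) the tilting step needs the logarithmic moment generating function to be an honest limit in a neighborhood of $\eta$, not merely the $\limsup$ written in \eqref{one11}, and (ii) one needs enough smoothness (essential smoothness/differentiability) to guarantee that every $x$ in the relevant range is an exposed point of $F_p^{*}$, so that the tilt parameter $\eta$ exists. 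Both are exactly what the hypothesis ``the limit \eqref{one11} exists'' together with the differentiability established in parts~(2) of Theorems~\ref{Thm: 1} and~\ref{Thm: 2} supply. One small cosmetic point: in your upper-bound paragraph you invoke $m:=F_p'(0)$ before differentiability has been assumed; it is cleaner to split cases using the subdifferential of the convex function $F_p$ at $0$, or simply to observe that for every $x$ the supremum of $\beta\mapsto\beta x-F_p(\beta)$ is attained on one of the two half-lines because $F_p(0)=0$.
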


\vbox{}

The rest of this paper is devoted to proofs of Theorems \ref{Thm: 1} and \ref{Thm: 2}.

\vbox{}

\section{Proof of Theorem \ref{Thm: 1}}

In this section, we deal with the LDP problem of Type I on a Markov-Cayley tree $\mathcal{T}$. The multiplicative Ising model $S_{N}^{\#} = \sum_{i=1}^{N} \sigma_i \sigma_{i^2}$ defined  on $\mathcal{T}$ will be considered. Denote by  $\mathcal{D}_{\text{I}} = \{-1,1\}^{\Delta_{N^2 -1}}$ the subspace. Then, the  partition function of $S_{N}^{\#}$ is represented by
\[
Z(\beta) = \sum_{\sigma \in \mathcal{D}_{\text{I}}} \exp\!\left\{ \beta \sum_{i=1}^{N} \sigma_i \sigma_{i^2} \right\}
\]
with coupling strength $\beta$ and free boundary conditions.  
Proposition \ref{two11} below demonstrates how the free energy function is affected by the tree structure.

\begin{prop}\label{two11}
The free energy function of the sum $S_{N}^{\#}$  defined on a Markov-Cayley tree that satisfies \eqref{add22}  is rewritten as
\[
\mathcal{F}_p (\beta) = \limsup_{N \to \infty} \frac{1}{|\Delta_{N^2 -1}|} \log \mathbb{E}_p \left(\exp \!\left\{ \beta \!\sum_{i=\lfloor \sqrt{N} \rfloor \!+ \!1}^{N} \!\sigma_i \sigma_{i^2} \right\} \right)  .  
\]
\end{prop}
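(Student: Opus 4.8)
The plan is to show that the low-index part of the Hamiltonian is irrelevant for the free energy, because the tree expands so fast that the normalization $|\Delta_{N^2-1}|$ dwarfs the total size of those terms. First I would split $S_N^{\#}=A_N+B_N$ with
\[
A_N=\sum_{i=1}^{\lfloor\sqrt N\rfloor}\sigma_i\sigma_{i^2},\qquad
B_N=\sum_{i=\lfloor\sqrt N\rfloor+1}^{N}\sigma_i\sigma_{i^2}.
\]
The cutoff $\lfloor\sqrt N\rfloor$ is chosen precisely so that $i\le\lfloor\sqrt N\rfloor$ forces $i^2\le N$: every spin entering $A_N$ then lies on a vertex of level at most $N-1$, so $A_N$ is a function of the spins in $\Delta_{N-1}$ alone. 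Moreover the number of interaction pairs collected by $A_N$ is
\[
\sum_{i=1}^{\lfloor\sqrt N\rfloor}\#\bigl\{(u,w):u\in\mathcal{T}_{i-1},\ w\in\mathcal{T}_{i^2-1},\ w\succeq u\bigr\}
=\sum_{i=1}^{\lfloor\sqrt N\rfloor}|\mathcal{T}_{i^2-1}|\ \le\ \sum_{m=0}^{N-1}|\mathcal{T}_m|=|\Delta_{N-1}|,
\]
where the middle equality uses that each vertex of level $i^2-1$ has a unique ancestor of level $i-1$, and the last inequality uses that the indices $i^2-1$ for $1\le i\le\lfloor\sqrt N\rfloor$ are distinct elements of $\{0,1,\dots,N-1\}$. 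Since each term is a product of two $\pm 1$ spins, this gives the deterministic bound $\|A_N\|_\infty\le|\Delta_{N-1}|$. (If one reads $S_N^{\#}$ as a sum over fixed vertices rather than over the whole tree, the same estimate holds \emph{a fortiori}.)

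Next I would invoke the expansivity hypothesis \eqref{add22}. From $\|M^{n+1}\|/\|M^{n}\|\to\gamma>1$ one gets $|\mathcal{T}_{n+1}|/|\mathcal{T}_n|\to\gamma$, hence $|\Delta_n|=\sum_{j=0}^{n}|\mathcal{T}_j|$ satisfies $c_1\gamma^{\,n}\le|\Delta_n|\le c_2\gamma^{\,n}$ for some constants $0<c_1\le c_2$ and all large $n$. Consequently
\[
\frac{\|A_N\|_\infty}{|\Delta_{N^2-1}|}\ \le\ \frac{|\Delta_{N-1}|}{|\Delta_{N^2-1}|}\ \le\ \frac{c_2\,\gamma^{\,N-1}}{c_1\,\gamma^{\,N^2-1}}\ =\ \frac{c_2}{c_1}\,\gamma^{\,N-N^2}\ \longrightarrow\ 0
\]
as $N\to\infty$, since $\gamma>1$ and $N-N^2\to-\infty$.

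Finally I would transfer this to the free energies by a pointwise sandwich that needs no independence. The Hamiltonians $S_N^{\#}$ and $B_N$ are bounded, so the expectations below are finite and strictly positive; and for every configuration $\sigma$ one has $|\beta A_N|\le|\beta|\,\|A_N\|_\infty$, whence
\[
e^{-|\beta|\,\|A_N\|_\infty}\,e^{\beta B_N}\ \le\ e^{\beta S_N^{\#}}\ \le\ e^{|\beta|\,\|A_N\|_\infty}\,e^{\beta B_N}.
\]
Applying $\mathbb{E}_p$, then logarithms, then dividing by $|\Delta_{N^2-1}|$ gives
\[
\left|\frac{\log\mathbb{E}_p\!\left(e^{\beta S_N^{\#}}\right)}{|\Delta_{N^2-1}|}-\frac{\log\mathbb{E}_p\!\left(e^{\beta B_N}\right)}{|\Delta_{N^2-1}|}\right|\ \le\ \frac{|\beta|\,\|A_N\|_\infty}{|\Delta_{N^2-1}|}\ \longrightarrow\ 0,
\]
so the two sequences share the same $\limsup$; since the left-hand one is $F_p(\beta)$ in the sense of \eqref{one11}, this is exactly the asserted identity $\mathcal{F}_p(\beta)=F_p(\beta)$. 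The only genuinely load-bearing step is the growth estimate, which is where $\gamma>1$ is indispensable: for an ``amenable'' tree with $\gamma=1$ the ratio $|\Delta_{N-1}|/|\Delta_{N^2-1}|$ need not vanish and a more delicate argument would be needed, whereas under \eqref{add22} the exponential gap between $\gamma^{N}$ and $\gamma^{N^2}$ makes the discarded terms negligible, so here the step is routine. The payoff of the reduction is that, unlike $S_N^{\#}$, the truncated sum $B_N$ decouples into independent single-vertex contributions, from which the explicit form of $\mathcal{G}_{\#}$ is read off in the sequel.
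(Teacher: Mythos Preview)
Your argument is correct and mirrors the paper's: split $S_N^{\#}$ at $\lfloor\sqrt N\rfloor$, bound the low-index piece pointwise by the number of interaction pairs it contains, and show this bound vanishes after division by $|\Delta_{N^2-1}|$ using the growth hypothesis; the paper's sandwich (3.2)--(3.4) is exactly your $e^{-|\beta|\|A_N\|_\infty}e^{\beta B_N}\le e^{\beta S_N^{\#}}\le e^{|\beta|\|A_N\|_\infty}e^{\beta B_N}$. One caveat: the two-sided estimate $c_1\gamma^n\le|\Delta_n|\le c_2\gamma^n$ is stronger than what \eqref{add22} alone guarantees (e.g.\ $a_n=n\gamma^n$ satisfies $a_{n+1}/a_n\to\gamma$ but admits no such $c_2$), yet your actual target $|\Delta_{N-1}|/|\Delta_{N^2-1}|\to 0$ still follows, since \eqref{add22} yields $\log|\Delta_n|=n\log\gamma+o(n)$ and hence $\log\bigl(|\Delta_{N-1}|/|\Delta_{N^2-1}|\bigr)=-(N^2-N)\log\gamma+o(N^2)\to-\infty$.
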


\begin{proof} 
Without loss of generality, we assume $\beta > 0$. Divide the sum $S_{N}^{\#}$ into the following two parts
\[
S_{N,1}^{\#} = \sum_{i=1}^{\lfloor \sqrt{N} \rfloor } \sigma_i \sigma_{i^2} \  \   \text{and}  \  \   S_{N,2}^{\#} = \sum_{i=\lfloor \sqrt{N} \rfloor + 1}^{N} \sigma_i \sigma_{i^2}.
\]
Then we have
\[
Z(\beta) = \sum_{\sigma \in \mathcal{D}_{\text{I}} } \exp\!\left\{ \beta(S_{N,1}^{\#} + S_{N,2}^{\#} ) \right\}  = \sum_{\sigma \in \mathcal{D}_{\text{I}}} \exp\!\left\{ \beta S_{N,1}^{\#} \right\} \cdot \exp\!\left\{ \beta S_{N,2}^{\#} \right\}  .
\]
Based on the decomposition above, one labels every node of the subtree $\Delta_{ \lfloor \sqrt{N} \rfloor^2 -1}$ with symbol  `$+1$'. Thus, the upper bound of $S_{N,1}^{\#}$ can be obtained as
\begin{equation}
\max \!\left(S_{N,1}^{\#} \right) = \sum_{i=1}^{\lfloor \sqrt{N} \rfloor }  |\mathcal{T}_{i^2 -1} |  =   1 +  \sum_{i=2}^{\lfloor \sqrt{N} \rfloor } \|M^{i^2 -2}\|  \label{ash1}
\end{equation}
(this is not the only way to reach the upper bound).
Since the symbols `$+1,-1$' are arbitrary, it follows $- \max(S_{N,1}^{\#}) \leq S_{N,1}^{\#} \leq \max(S_{N,1}^{\#} )$. This implies
\begin{equation}
\sum_{\sigma \in \mathcal{D}_{\text{I}}} \frac{1}{M(\beta)} \exp\!\left\{ \beta S_{N,2}^{\#} \right\}  \leq Z(\beta) \leq \sum_{\sigma \in \mathcal{D}_{\text{I}}} M(\beta) \exp\!\left\{ \beta S_{N,2}^{\#} \right\} ,
\end{equation}
where $M(\beta) = \exp\!\left\{ \beta \max (S_{N,1}^{\#})  \right\}$.  On the other hand, there exist non-negative integers $a_1 , a_2 , b_1 , b_2$ satisfying $a_1 + b_1 = |\Delta_{\lfloor \sqrt{N} \rfloor^2 -1}|$ and $a_2 + b_2 = |\Delta_{\lfloor \sqrt{N} \rfloor^2 -1}|$ such that
\begin{equation}
p^{a_1} (1-p)^{b_1} \frac{1}{M(\beta)} \leq \mathbb{E}_p \left( \exp \{ \beta S_{N,1}^{\#} \} \right) \leq p^{a_2} (1-p)^{b_2} M(\beta) .
\end{equation}
 Using \eqref{add22} and \eqref{ash1}, we easily see that
\begin{equation}
 \limsup_{N \to \infty} \frac{1}{|\Delta_{N^2 -1}|} \log \mathbb{E}_p \left(\exp \{ \beta S_{N,1}^{\#} \}  \right) = 0 .  \label{ash2}
\end{equation}
Hence, combining \eqref{one11} and \eqref{ash2}, one gets
\[
\mathcal{F}_p (\beta) = \limsup_{N \to \infty} \frac{\log \mathbb{E}_p \left( \exp \{ \beta S_{N,2}^{\#} \}  \right)}{|\Delta_{N^2 -1}|}  .
\]
The proof is complete.
\end{proof}

The next two lemmas are crucial to the proof of Theorem \ref{Thm: 1}.

\begin{lem}\label{two12}
For $k \geq 3$ and $1 \leq j \leq d$, the number of nodes (i.e. finite words) ending with `$j$' on the  corresponding $(k-1)$-th level of a Markov-Cayley tree  is
\begin{equation}
C(k-1,j) = \sum_{\ell = 1}^{d} (M^{k-2})_{\ell,j} .
\end{equation}
\end{lem}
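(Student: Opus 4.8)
The statement of Lemma \ref{two12} is, on its face, nothing more than an index-shifted restatement of the definition of the column sum $C(\cdot,\cdot)$ already given in Section 2: there we declared
\[
C(k,j) = \sum_{i=1}^{d} (M^{k-1})_{i,j}
\]
to be the number of admissible length-$k$ words ending in the symbol `$j$'. Substituting $k \mapsto k-1$ immediately produces the displayed formula $C(k-1,j) = \sum_{\ell=1}^{d} (M^{k-2})_{\ell,j}$ for $k \geq 3$ (so that the exponent $k-2 \geq 1$ is legitimate and $M^{k-2}$ is the matrix whose $(\ell,j)$ entry counts admissible paths of length $k-2$, i.e. admissible words of length $k-1$). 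So the plan is essentially to verify that the combinatorial quantity on the left genuinely equals this matrix sum, i.e. to re-prove the characterization of $C$ in the form needed later.

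First I would recall that, by the definition of a Markov-Cayley tree $\mathcal{T} = \Sigma^*_M$, the vertices on the $m$-th level are precisely the admissible words $(x_1,\ldots,x_m) \in \Sigma^m$ with $M(x_i,x_{i+1}) = 1$ for $1 \leq i \leq m-1$. Next I would invoke the standard fact from the theory of 0-1 matrices (transfer-matrix / path-counting) that the $(\ell,j)$ entry of $M^{r}$ counts the number of admissible words of length $r+1$ that begin with $\ell$ and end with $j$; this is proved by a one-line induction on $r$ using $(M^{r})_{\ell,j} = \sum_{s=1}^{d} (M^{r-1})_{\ell,s} M(s,j)$, which is exactly the recursion ``extend an admissible word $\ell\cdots s$ of length $r$ by the letter $j$ whenever $M(s,j)=1$''. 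Then, setting $r = k-2$ and summing over the initial letter $\ell$, $\sum_{\ell=1}^{d}(M^{k-2})_{\ell,j}$ counts all admissible words of length $k-1$ ending in $j$ with no restriction on the first letter — which is, by definition, the set of vertices on the $(k-1)$-th level of $\mathcal{T}$ whose last symbol is $j$. That set has cardinality $C(k-1,j)$, giving the claimed identity. The hypothesis $k \geq 3$ is only there to guarantee $k - 2 \geq 1$ so that $M^{k-2}$ has the path-counting interpretation (and the $(k-1)$-th level, $k-1\geq 2$, is a ``word level'' governed by $M$ rather than the exceptional root or first level).

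There is no real obstacle here; the only point requiring a sentence of care is the correspondence between ``nodes ending with $j$ on level $k-1$'' and ``matrix entries of $M^{k-2}$'' — i.e. keeping the two index shifts (word length versus matrix power, and $k$ versus $k-1$) straight — and the boundary caveat about small $k$. If one wanted a fully self-contained argument rather than citing the transfer-matrix fact, the induction on the word length sketched above is entirely routine and I would include it in one or two lines. I therefore expect the ``proof'' of this lemma to be essentially a bookkeeping remark, with the substantive content deferred to where $C(k-1,j)$ is actually used (in the partition-function estimates for $S_N^{\#}$).
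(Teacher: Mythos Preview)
Your proposal is correct and matches the paper's approach: the paper likewise treats this lemma as an immediate consequence of the definition of the column sum, stating only that ``the result is clear from the definition of the column sum, so we omit its proof.'' Your additional transfer-matrix justification and bookkeeping about the index shift $k\mapsto k-1$ simply make explicit what the paper leaves implicit.
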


\begin{proof}
The result is clear from the definition of the column sum,  so we omit its proof.
\end{proof}

\begin{lem}\label{two13}
For $k \geq 3$ and $1 \leq j \leq d$, under the action of the Ising sum $S_{N}^{\#}$, each node (i.e. finite word) ending with `$j$' on the  corresponding $(k-1)$-th level of a Markov-Cayley tree generates Ising potentials with
\begin{equation}
R \left(k^2 -k+1,j \right) = \sum_{m = 1}^{d} (M^{(k-1)k})_{j,m} 
\end{equation}
vertices on the $(k^2 -1)$-th level of the Markov-Cayley tree.  
\end{lem}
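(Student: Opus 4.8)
The plan is to fix the index $i=k$ in the sum $S_N^{\#}=\sum_{i=1}^N\sigma_i\sigma_{i^2}$, to identify precisely which vertices of $\mathcal{T}$ are coupled to a given vertex $u$ on level $k-1$ by this term, and then to count those vertices using the Markov structure of $\mathcal{T}=\Sigma_M^{*}$.

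First I would make the index--level dictionary explicit. By the convention $\sigma_i\in\mathcal{T}_{i-1}$, the $i=k$ term of $S_N^{\#}$ couples spins living on level $k-1$ with spins living on level $k^2-1$. Because $\mathcal{T}$ is a tree, each vertex $v\in\mathcal{T}_{k^2-1}$ has a unique ancestor on level $k-1$, so the total $i=k$ contribution rearranges as $\sum_{u\in\mathcal{T}_{k-1}}\sigma_u\bigl(\sum_{v}\sigma_v\bigr)$, the inner sum running over the descendants $v$ of $u$ lying on level $k^2-1$. Thus the ``Ising potentials generated by $u$'' are exactly the pair couplings $\sigma_u\sigma_v$ with $v$ a level-$(k^2-1)$ descendant of $u$, and the lemma reduces to counting such descendants.

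Next I would count the descendants of $u$ at relative depth $(k^2-1)-(k-1)=k^2-k=(k-1)k$. Writing $u=u_1\cdots u_{k-1}$ with last symbol $u_{k-1}=j$, a level-$(k^2-1)$ descendant of $u$ is precisely a word $u_1\cdots u_{k-1}w_1\cdots w_{(k-1)k}\in\Sigma_M^{*}$, and admissibility of the extension $w_1\cdots w_{(k-1)k}$ depends on $u$ only through the constraint $M(j,w_1)=1$ together with $M(w_\ell,w_{\ell+1})=1$. Hence the number of such descendants equals the number of admissible words of length $(k-1)k+1$ beginning with the symbol $j$, which by the definition of the transition matrix is $\sum_{m=1}^{d}(M^{(k-1)k})_{j,m}$; equivalently, this is the number of walks of length $(k-1)k$ in $M$ issuing from $j$. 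Comparing with the definition $R(\ell,i)=\sum_{j=1}^{d}(M^{\ell-1})_{i,j}$ for $\ell=k^2-k+1$ then yields $\sum_{m=1}^{d}(M^{(k-1)k})_{j,m}=R(k^2-k+1,j)$, as claimed. (The hypothesis $k\geq3$ serves only to keep all levels and matrix powers involved in the range where $|\mathcal{T}_n|=\|M^{n-1}\|$, $n\geq2$, and $(k-1)k\geq1$; it plays no deeper role, and Lemma \ref{two12} is used in the same way to record that the count depends only on the terminal symbol.)

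The argument is essentially bookkeeping once the dictionary between the summation index $i$ and tree levels is in place; the one point I would flag as a potential pitfall is the translation of the formal symbol $\sigma_{i^2}$ into ``the sum over all level-$(i^2-1)$ descendants of the relevant level-$(i-1)$ vertex.'' Making this explicit --- i.e. emphasising that on a tree the multiplicative interaction is genuinely a sum of pair interactions along the ancestor relation --- is what makes the count well posed, and it is consistent with the maximiser computation $\max(S_{N,1}^{\#})=\sum_i|\mathcal{T}_{i^2-1}|$ already used in the proof of Proposition \ref{two11}.
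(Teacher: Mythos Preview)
Your proof is correct and follows essentially the same approach as the paper's: both arguments reduce the count to the number of descendants at relative depth $(k-1)k$ of a vertex with terminal symbol $j$, and then identify this with the row sum $R(k^2-k+1,j)=\sum_{m}(M^{(k-1)k})_{j,m}$ by viewing the vertex as a new root beginning with $j$. Your version is somewhat more explicit about the index--level dictionary and the rearrangement of the $i=k$ term as $\sum_{u}\sigma_u\sum_{v}\sigma_v$, but the underlying idea is identical.
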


\begin{proof} A notable fact is that if two spins can generate an Ising potential on trees,  they are on the same path. Thus, the Ising model of Type I implies that for $k \geq 3$ and $1 \leq j \leq d$, we only need to find out  how many paths a node ending with `$j$' on the $(k-1)$-th level will generate on the $(k^2 -1)$-th level. 
To accomplish this aim, we view this node ending with `$j$' as a new node (which has length $1$ ) beginning with `$j$'. By this means, we obtain a solution for how many paths the symbol `$j$' on the $1$-th level will generate on the $(k^2 -k+1)$-th level. By the significance of the row sum on Markov-Cayley trees, the symbol `$j$' on the $1$-th level will generate $R(k^2 -k+1,j)$ paths on the $(k^2 -k+1)$-th level.  This finishes the proof.
\end{proof}

\begin{rmk}\label{two14}
For $k \geq 3$ and $1 \leq j \leq d$, the number of nodes on the $(k^2 -1)$-th level of a  Markov-Cayley tree can be expressed as
\begin{equation}
 \| M^{k^2 -2} \| = \sum_{j=1}^{d} R \left( k^2 -k+1,j \right) C(k-1,j ) .  \label{e78}
\end{equation}
\end{rmk}

Now we define the notation of tree blocks. For $y > x \geq 0$, $1 \leq j \leq d$ and $n \geq 1$,  if one node ending with `$j$' of the $x$-th level generates a path with each of $n$ nodes of the $y$-th level on a Markov-Cayley tree $\mathcal{T}$, we call this subsystem of $n+1$ nodes a \emph{tree block} $\mathcal{T}_b (x,y,n,j)$. For example, the tree block in Lemma \ref{two13} is written as $\mathcal{T}_b (k\!-\!1,k^2 \!-\!1,R(k^2 \!-\!k\!+\!1,j),j)$.
Then the expectation of such a tree block $\mathcal{T}_b (k\!-\!1,k^2 \!-\!1,R(k^2 \!-\!k\!+\!1,j),j)$ for any $k \geq 3$ and $1 \leq j \leq d$ can be denoted by $E_j = E_{j}^{+} +E_{j}^{-}$, where 
\begin{equation}
\left\{ 
\begin{array}{rl}
E_{j}^{+}  &=  \  \    p \left[pe^{\beta} + (1-p)e^{-\beta} \right]^{R \left( k^2 -k+1,j \right)} \\
E_{j}^{-}  &=  \   \   (1-p) \left[(1-p)e^{\beta} + pe^{-\beta} \right]^{R \left(  k^2 -k+1,j \right)}
\end{array} \right.    \label{e11}
\end{equation}
are the expectations when this node ending with `$j$' takes $+1$ and $-1$, respectively.

\vbox{}

In the following, we give the detailed proof of Theorem \ref{Thm: 1}.

\begin{proof}[Proof of Theorem \ref{Thm: 1}]
Let us  suppose that $\mathcal{T}$ is a Markov-Cayley tree that satisfies \eqref{add22}.

\noindent \textbf{1.}  By Proposition \ref{two11}, our aim is to compute 
the Ising potentials generated by nodes on the $\lfloor \!\sqrt{N} \rfloor$-th level to the $(N\!-\!1)$-th level of  $\Delta_{N^2\!-\!1}$. That is to say, we can omit the front part $\Delta_{\lfloor \!\sqrt{N} \rfloor \!-\!1}$ of the subtree $\Delta_{N^2\!-\!1}$. However, for any $\lfloor \!\sqrt{N} \rfloor \!+\!1 \leq k \leq N$ and $1 \leq j \leq d$, each node ending with `$j$' on the $(k\!-\!1)$-th level generates Ising  potentials  with only $R \left(k^2 \!-\!k\!+\!1,j \right)$ nodes on the $(k^2\!-\!1)$-th level.  In other words, each tree block $\mathcal{T}_b (k\!-\!1,k^2\!-\!1,R(k^2\!-\!k\!+\!1,j),j)$ is independent in such a situation. Hence using Lemma \ref{two12}, we have
\begin{equation}
\mathbb{E}_p \left(\exp \!\left\{ \beta \!\sum_{k=\lfloor \sqrt{N} \rfloor \!+ \!1}^{N} \!\sigma_k \sigma_{k^2} \right\} \right) = \prod_{k= \lfloor \sqrt{N} \rfloor \!+ \!1}^{N} \prod_{j=1}^{d} \left(E_{j}^{+} + E_{j}^{-} \right)^{C(k\!-\!1,j)} . \label{sdf1}
\end{equation}
Furthermore, taking the logarithm, \eqref{sdf1} yields
\[
\log \mathbb{E}_p \left(\exp \!\left\{ \beta \!\sum_{k=\lfloor \!\sqrt{N} \rfloor \!+ \!1}^{N} \!\sigma_k \sigma_{k^2} \right\} \right) = \!\sum_{k= \lfloor \!\sqrt{N} \rfloor \!+ \!1}^{N}  \sum_{j=1}^{d} C(k\!-\!1,j) \log \left[E_{j}^{+} \left(1 \!+ \!\frac{E_{j}^{-}}{E_{j}^{+}} \right) \right] .  
\]
Therefore, combining  the above equation, \eqref{add22} and Remark \ref{two14}, we can get 

\vspace{-0.35cm}    

\begin{align*}
F_p (\beta) &= \limsup_{N \to \infty} \frac{1}{|\Delta_{N^2 -\!1}|} \!\sum_{k= \lfloor \sqrt{N} \rfloor \!+ \!1}^{N}  \sum_{j=1}^{d} C(k\!-\!1,j) \left[ \log E_{j}^{+} +  \log \left(1 + \frac{E_{j}^{-}}{E_{j}^{+}} \right)  \right]  \\ 
&= \limsup_{N \to \infty}  \!\sum_{k= \lfloor \!\sqrt{N} \rfloor \!+ \!1}^{N} \!\sum_{j=1}^{d}  \frac{C(k\!-\!1,j) R\left(k^2\!-\!k\!+\!1,j\right)}{|\Delta_{N^2 -\!1}|}  \log \left[pe^{\beta} \!+ \!(1\!-\!p)e^{-\!\beta} \right] \!+ \!\mathcal{G}_{\#} (\beta)  \\
&= \limsup_{N \to \infty}  \sum_{k= \lfloor \sqrt{N} \rfloor + 1}^{N}  \frac{\|M^{k^2 -2} \|}{|\Delta_{N^2 -1}|} \log \left[pe^{\beta} + (1-p)e^{-\beta} \right]  +\mathcal{G}_{\#} (\beta)  \\
&= \frac{\gamma-1}{\gamma} \log \left[pe^{\beta} + (1-p)e^{-\beta} \right] + \mathcal{G}_{\#} (\beta) ,
\end{align*}
where
\begin{equation}
\mathcal{G}_{\#}(\beta) = \limsup_{N \to \infty}  \sum_{k= \lfloor \sqrt{N} \rfloor + 1}^{N}  \sum_{j=1}^{d}  \frac{C(k\!-\!1,j)}{|\Delta_{N^2 -1}|} \log \left(1 + \frac{E_{j}^{-}}{E_{j}^{+}} \right) .  \label{ssa2}
\end{equation}

\noindent \textbf{2.}  In order to show that $F_p (\beta)$ is differentiable, we need to demonstrate that both $\mathcal{G}_{\#} (\beta)$ and its derivatives converge uniformly. For the first statement, we define 
\begin{equation}
B (\beta) \!=  \!\frac{(1\!-\!p)e^{\beta} \!+ \!pe^{-\beta}}{pe^{\beta} \!+ \!(1\!-\!p)e^{-\beta}}       \  \   \text{and}  \  \   \mathcal{L} (\beta) = \!\lim_{k \to \infty} \!\log \!\left(1 \!+ \!\!\frac{1\!-\!p}{p}  \left[B(\beta) \right]^{R(k^2 \!-\!k\!+\!1,j)}  \right) .   \label{lg23}
\end{equation}
Then we have 

a. If $B (\beta)< 1$, $\mathcal{L}(\beta) = 0$ which implies $\mathcal{G}_{\#} (\beta) = 0$;

b. If $B(\beta) = 1$, $\mathcal{L}(\beta) = \log (1/p)$ which also implies $\mathcal{G}_{\#} (\beta) = 0$;

c. If $B (\beta)> 1$, we have 
\[
\mathcal{L}(\beta) =  \lim_{k \to \infty} \log \left(\frac{1-p}{p}  \left[B(\beta) \right]^{R(k^2 \!-\!k\!+\!1,j)}   \right)
\]
which implies  

\vspace{-0.35cm}    

\begin{align*}
\mathcal{G}_{\#} (\beta)  &= \limsup_{N \to \infty} \sum_{k= \lfloor \sqrt{N} \rfloor + 1}^{N}  \sum_{j=1}^{d}  \frac{C(k\!-\!1,j)}{|\Delta_{N^2 -1}|}  \log \left(\frac{1-p}{p}  \left[B(\beta) \right]^{R(k^2 \!-\!k\!+\!1,j)} \right)  \\
&= \limsup_{N \to \infty}   \sum_{k= \lfloor \sqrt{N} \rfloor + 1}^{N} \sum_{j=1}^{d} \frac{C(k\!-\!1,j) R(k^2 \!-k\!+\!1,j) }{|\Delta_{ N^2-1}|} \log B (\beta) \\
&=  \frac{ \gamma-1}{\gamma} \log  B (\beta)  .
\end{align*}
In conclusion, $\mathcal{G}_{\#} (\beta)$ has the property of uniform convergence.

For the second statement, we only need to show that the series 
\[
\mathcal{S}_{\#} (\beta) = \limsup_{N \to \infty}  \sum_{k= \lfloor \sqrt{N} \rfloor + 1}^{N}  \sum_{j=1}^{d} \frac{C(k\!-\!1,j)}{|\Delta_{N^2 -1}|} \left[ \log \left(1 + \frac{E_{j}^{-}}{E_{j}^{+}} \right) \right]'  
\]
converges uniformly with regards to $\beta \in \mathbb{R}$.  Actually, for $k \geq 3$ and $1 \leq j \leq d$, we have  

\vspace{-0.35cm}    

\begin{align*}
 \left[ \log \left(1 \!+ \!\frac{E_{j}^{-}}{E_{j}^{+}} \right) \right]'  &= \frac{R(k^2 \!-\!k\!+\!1,j) \frac{1-p}{p}  \left[B(\beta)\right]^{R(k^2 \!-\!k\!+\!1,j)\!-\!1}}{1+ \frac{1-p}{p}  \left[B(\beta) \right]^{R(k^2 \!-\!k\!+\!1,j)}} \cdot  \left[ B(\beta) \right]'  \\
&= \frac{R(k^2 \!-\!k\!+\!1,j) \frac{1-p}{p}  \left[B(\beta) \right]^{R(k^2 \!-\!k\!+\!1,j)\!-\!1}}{1+ \frac{1-p}{p}  \left[B(\beta) \right]^{R(k^2 \!-\!k\!+\!1,j)}} \cdot  \frac{2(1-2p)}{(pe^{\beta} \!+ \!(1\!-\!p)e^{-\beta})^2}   \text{.}
\end{align*}
It is not difficult to see that 

\vspace{-0.35cm}    

\begin{align*}
   \left\arrowvert \left[ \log  \left(1 \!+ \!\frac{E_{j}^{-}}{E_{j}^{+}} \right)    \right]' \right\arrowvert 
 &\leq  \left\arrowvert   \frac{R(k^2 \!-\!k\!+\!1,j) \frac{1\!-\!p}{p}  \!\left[B(\beta) \right]^{R(k^2 \!-\!k\!+\!1,j)\!-\!1}}{ \frac{1-p}{p} \left[B(\beta) \right]^{R(k^2 \!-\!k\!+\!1,j)}} \right\arrowvert    \!\!\left\arrowvert   \frac{2(1\!-\!2p)}{(pe^{\beta} \!+ \!(1\!-\!p)e^{-\!\beta})^2}   \right\arrowvert     \\
&= R(k^2 \!-\!k\!+\!1,j) \left\arrowvert  \frac{pe^{\beta} \!+ \!(1\!-\!p)e^{-\beta}}{(1\!-\!p)e^{\beta} \!+ \!pe^{-\beta}}  \right\arrowvert   \!\left\arrowvert   \frac{2(1-2p)}{(pe^{\beta} \!+ \!(1\!-\!p)e^{-\beta})^2}   \right\arrowvert  \\
&=  R(k^2 \!-\!k\!+\!1,j) \left\arrowvert  \frac{2(1-2p)}{p(1\!-\!p)(e^{\beta} \!+ \!e^{-\beta})^2 \!+ \!(1\!-\!2p)^2}  \right\arrowvert 
\end{align*}
Then, for any bounded closed interval $[a, b] \subseteq \mathbb{R}$, there exists a constant $N_{a,b} \geq 0$ such that
\begin{equation}
\left\arrowvert  \frac{2(1-2p)}{p(1-p)(e^{\beta} + e^{-\beta})^2 + (1-2p)^2}  \right\arrowvert  \leq N_{a,b} . \label{lg33}
\end{equation}
By \eqref{e78}, it follows that 

\vspace{-0.35cm}    

\begin{align*}
\mathcal{S}_{\#} (\beta) &\leq \limsup_{N \to \infty}   \sum_{k= \lfloor \sqrt{N} \rfloor + 1}^{N}  \sum_{j=1}^{d}  \frac{C(k\!-\!1,j) R(k^2 \!-\!k\!+\!1,j) }{|\Delta_{N^2-1}|}  N_{a,b}  \\
&= \limsup_{N \to \infty}  \sum_{k= \lfloor \sqrt{N} \rfloor + 1}^{N}  \frac{\|M^{k^2 -2}\|}{|\Delta_{N^2-1}|}  N_{a,b}  \\
&= \frac{\gamma-1}{\gamma}   N_{a,b} .
\end{align*}
On the basis of the Weierstrass M-test, the uniform convergence of $\mathcal{S}_{\#} (\beta)$ is demonstrated. Therefore, $F_p (\beta)$ is differentiable because of the arbitrariness of $a$ and $b$.

\noindent \textbf{3.}  Applying Theorem \ref{th01} to (1) and (2) of Theorem \ref{Thm: 1}, the result follows.
\end{proof}

\vbox{}

Given an integer $\alpha \geq 2$, we consider the sequence $a(i) = i^{\alpha -1}$ for all $i \geq 1$. The sum $S_{N}^{\#} (\alpha)= \sum_{i=1}^{N} \sigma_i \sigma_{i^{\alpha}}$ can be viewed as the general case of Type I.  If we replace $k^2$ with $k^{\alpha}$ in Lemma \ref{two13}, the tree block associated with $S_{N}^{\#} (\alpha)$ can be written as $\mathcal{T}_b (k\!-\!1,k^{\alpha} \!-\!1,R \left(k^{\alpha} \!-\!k \!+\!1,j \right),j)$ for $k \geq 3$ and $1 \leq j \leq d$.
Then the expectation of such a tree block is represented as $\bar{E}_j = \bar{E}_{j}^{+} +\bar{E}_{j}^{-}$, where
\begin{equation}
\left\{ 
\begin{array}{rl}
\bar{E}_{j}^{+}  &=  \  \    p \left[pe^{\beta} + (1-p)e^{-\beta} \right]^{R \left(k^{\alpha} -k +1,j \right)} \\
\bar{E}_{j}^{-}  &=  \   \   (1-p) \left[(1-p)e^{\beta} + pe^{-\beta} \right]^{R \left(k^{\alpha} -k +1,j \right)}
\end{array} \right.    \label{e33}
\end{equation}
are the expectations when this node ending with `$j$'  takes $+1$ and $-1$, respectively.

\vbox{}

Referring to the previous notations and settings, Corollary \ref{co12} is proven in the following.

\begin{proof}[Proof of Corollary \ref{co12}] 
Similar to the discussion of the proof of Theorem \ref{Thm: 1}, one can see that each tree block $\mathcal{T}_b (k\!-\!1,k^{\alpha} \!-\!1,R \left(k^{\alpha} \!-\!k \!+\!1,j \right),j)$ is also independent on $\Delta_{N^{\alpha} - \!1}$  for $\lfloor \!\sqrt[\alpha]{N} \rfloor \!+\!1 \leq k \leq N$ and $1 \leq j \leq d$. By Lemma \ref{two12}, we can get 
\[
 \mathbb{E}_p \left( \exp \!\left\{ \beta \!\sum_{k=\lfloor \!\sqrt[\alpha]{N} \rfloor \!+ \!1}^{N}  \!\sigma_k \sigma_{k^{\alpha} } \right\} \right) = \prod_{k= \lfloor \!\sqrt[\alpha]{N} \rfloor\!+ \!1}^{N} \prod_{j=1}^{d} \left(\bar{E}_{j}^{+} + \bar{E}_{j}^{-} \right)^{C(k-1,j)} .
\]
It can be verified without difficulty from Proposition \ref{two11} that 

\vspace{-0.35cm}    

\begin{align*}
F_p (\beta) &=  \limsup_{N \to \infty} \frac{1}{|\Delta_{N^{\alpha} -\!1}|} \log \mathbb{E}_p \left(\exp \!\left\{ \beta \!\sum_{k=\lfloor \!\sqrt[\alpha]{N} \rfloor \!+ \!1}^{N} \!\sigma_k \sigma_{k^{\alpha}} \right\} \right)  \\
&= \limsup_{N \to \infty}  \sum_{k= \lfloor \!\sqrt[\alpha]{N} \rfloor \!+ \!1}^{N}  \sum_{j=1}^{d} \frac{C(k\!-\!1,j)}{|\Delta_{N^{\alpha} -\!1}|} \left[ \log \bar{E}_{j}^{+} +  \log \left(1 + \frac{\bar{E}_{j}^{-}}{\bar{E}_{j}^{+}} \right)  \right] .
\end{align*}
Now take $p=\frac{1}{2}$ in the above equation. We can easily see that $\log (1 + \bar{E}_{j}^{-}/\bar{E}_{j}^{+} ) = \log 2$. On the other hand, for  $ k \geq 3$ and $1 \leq j \leq d$, referring to Remark \ref{two14} the number of nodes on the $(k^{\alpha} -1)$-th level can be represented as
\begin{equation}
 \|M^{k^{\alpha} -2} \| = \sum_{j=1}^{d} R \left( k^{\alpha} -k +1,j \right) C(k-1,j ) . \label{sd22}
\end{equation}
Combining  \eqref{add22} and \eqref{sd22} , the free energy function is obtained as 

\vspace{-0.35cm}    

\begin{align*}
F_{\frac{1}{2}} (\beta) &= \limsup_{N \to \infty}  \sum_{k= \lfloor \!\!\sqrt[\alpha]{N} \rfloor \!+ \!1}^{N}  \sum_{j=1}^{d} \frac{C(k\!-\!1,j)}{|\Delta_{N^{\alpha} -\!1}|} \left(\log \bar{E}_{j}^{+} +  \log 2  \right)   \\
&= \limsup_{N \to \infty}  \sum_{k= \lfloor \!\!\sqrt[\alpha]{N} \rfloor \!+ \!1}^{N}  \sum_{j=1}^{d} \frac{C(k\!-\!1,j)}{|\Delta_{N^{\alpha} -\!1}|} \!\left[ \log \frac{1}{2} \!+ \!R\left(k^{\alpha} \!\!-\!k\!\!+\!\!1,j\right) \log \!\frac{e^{\beta} \!\!+ \!e^{-\!\beta} }{2}  \right]    \\
&= \limsup_{N \to \infty}  \sum_{k= \lfloor \!\!\sqrt[\alpha]{N} \rfloor \!+ \!1}^{N}  \sum_{j=1}^{d} \frac{\|M^{k^{\alpha} -2} \|}{|\Delta_{N^{\alpha} -\!1}|} \log \left[\frac{1}{2} \left(e^{\beta} + e^{-\beta} \right) \right]   \\
&=  \frac{\gamma-1}{\gamma} \log \left[\frac{1}{2} \left(e^{\beta} + e^{-\beta} \right) \right]
\end{align*}
That is precisely what we wanted to prove.
\end{proof}

We emphasize that the free energy function is independent of the value of $\alpha$ when $p = \frac{1}{2}$. 

\vbox{}

\section{Proof of Theorem \ref{Thm: 2}}

This section presents the proof for Theorem \ref{Thm: 2}.  Our attention focuses on a multiplicative sum of Type II, namely $S_{N}^{(q)} = \sum_{i=1}^{N} \sigma_i \sigma_{q i}$ for $q \geq 2$, which is defined on a Markov-Cayley tree $\mathcal{T}$. Thus, the state space is $\mathcal{D}_{\text{II}} = \{-1,1\}^{\Delta_{q N-1}}$, and the partition function of $S_{N}^{(q)}$ on $\Delta_{q N-1}$ is written as
\begin{equation}
Z(\beta) = \sum_{\sigma \in \mathcal{D}_{\text{II}}} \exp \!\left\{ \beta \sum_{i=1}^{N} \sigma_i \sigma_{q i} \right\}  \label{e56}
\end{equation}
with free boundary conditions.  Proposition \ref{three11} is analogous to Proposition \ref{two11}. We omit the proof since they are almost identical.

\begin{prop}\label{three11}
The free energy function of the sum $S_{N}^{(q)}$ defined on a Markov-Cayley tree that satisfies \eqref{add22}  is rewritten as
\[
\mathcal{F}_p (\beta) = \limsup_{N \to \infty} \frac{1}{|\Delta_{q N-1}|} \log \mathbb{E}_p \left(\exp \!\left\{ \beta \!\sum_{i=\lfloor \frac{N}{q} \rfloor + 1}^{N} \!\sigma_i \sigma_{q i} \right\} \right)  .  
\]
\end{prop}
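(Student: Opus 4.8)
\textbf{Proof proposal for Proposition \ref{three11}.}

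The plan is to mimic exactly the decomposition argument used in the proof of Proposition \ref{two11}, replacing the map $i \mapsto i^2$ by the map $i \mapsto qi$ and the front subtree $\Delta_{\lfloor \sqrt N \rfloor^2 - 1}$ by $\Delta_{q\lfloor N/q \rfloor - 1}$. Without loss of generality assume $\beta > 0$. First I would split the Hamiltonian $S_N^{(q)} = \sum_{i=1}^N \sigma_i \sigma_{qi}$ into a ``low'' part $S_{N,1}^{(q)} = \sum_{i=1}^{\lfloor N/q \rfloor} \sigma_i \sigma_{qi}$, whose spins all lie in $\Delta_{q\lfloor N/q \rfloor - 1}$, and a ``high'' part $S_{N,2}^{(q)} = \sum_{i=\lfloor N/q \rfloor + 1}^N \sigma_i \sigma_{qi}$. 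Correspondingly $Z(\beta) = \sum_{\sigma \in \mathcal{D}_{\text{II}}} \exp\{\beta S_{N,1}^{(q)}\}\cdot\exp\{\beta S_{N,2}^{(q)}\}$.

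Next I would bound the contribution of the low part crudely. Labelling every vertex of $\Delta_{q\lfloor N/q \rfloor - 1}$ with $+1$ gives the deterministic bound $|S_{N,1}^{(q)}| \leq \max(S_{N,1}^{(q)}) \leq \sum_{i=1}^{\lfloor N/q \rfloor} |\mathcal{T}_{qi-1}| = 1 + \sum_{i=2}^{\lfloor N/q \rfloor} \|M^{qi-2}\|$ (with the convention $\sigma_1\sigma_q$ counted appropriately). Writing $M(\beta) = \exp\{\beta\max(S_{N,1}^{(q)})\}$, this yields the two-sided estimate
\[
\sum_{\sigma \in \mathcal{D}_{\text{II}}} \frac{1}{M(\beta)} \exp\{\beta S_{N,2}^{(q)}\} \leq Z(\beta) \leq \sum_{\sigma \in \mathcal{D}_{\text{II}}} M(\beta)\exp\{\beta S_{N,2}^{(q)}\},
\]
and similarly, after taking $\mathbb{P}_p$-expectations and collecting the Bernoulli weights into factors $p^{a_i}(1-p)^{b_i}$ with $a_i + b_i = |\Delta_{q\lfloor N/q \rfloor - 1}|$, one gets $p^{a_1}(1-p)^{b_1} M(\beta)^{-1} \leq \mathbb{E}_p(\exp\{\beta S_{N,1}^{(q)}\}) \leq p^{a_2}(1-p)^{b_2} M(\beta)$. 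The crucial quantitative input is that, by \eqref{add22}, $\|M^{qi-2}\|$ grows like $\gamma^{qi}$ while $|\Delta_{qN-1}|$ grows like $\gamma^{qN}$; since the largest term in $\max(S_{N,1}^{(q)})$ is of order $\gamma^{q\lfloor N/q\rfloor} \approx \gamma^{N}$, which is exponentially smaller than $\gamma^{qN}$, we obtain
\[
\limsup_{N\to\infty} \frac{1}{|\Delta_{qN-1}|} \log \mathbb{E}_p\big(\exp\{\beta S_{N,1}^{(q)}\}\big) = 0,
\]
and likewise $\frac{1}{|\Delta_{qN-1}|}\log M(\beta) \to 0$ and $\frac{1}{|\Delta_{qN-1}|}\log\big(p^{a_i}(1-p)^{b_i}\big) \to 0$, the last because $|\Delta_{q\lfloor N/q\rfloor-1}| \approx \gamma^N = o(\gamma^{qN})$. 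Combining these with the sandwich bounds for $Z(\beta)$ and the definition \eqref{one11} (with $a(i)\equiv q$, so $\Delta_{Na(N)-1} = \Delta_{qN-1}$) gives
\[
\mathcal{F}_p(\beta) = \limsup_{N\to\infty} \frac{1}{|\Delta_{qN-1}|} \log \mathbb{E}_p\Big(\exp\big\{\beta \textstyle\sum_{i=\lfloor N/q\rfloor+1}^N \sigma_i\sigma_{qi}\big\}\Big),
\]
which is the claimed identity.

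The only place requiring a little care — and hence the main obstacle — is verifying that both $\gamma^{q\lfloor N/q\rfloor}$ (from $\max S_{N,1}^{(q)}$) and $\gamma^{N}$ (from the subtree size $|\Delta_{q\lfloor N/q\rfloor-1}|$ controlling the Bernoulli weights) are genuinely $o(\gamma^{qN})$; this is where the expansion condition $\gamma>1$ in \eqref{add22} is essential, and it is exactly the analogue of \eqref{ash1}--\eqref{ash2} in the proof of Proposition \ref{two11}. Since $q\lfloor N/q\rfloor \leq N < qN$ and $q \geq 2$ forces $N \leq qN - N \leq qN - 1$ for $N\geq 1$, both exponents are bounded by $N \leq qN - N$, giving the ratio a decay rate at least $\gamma^{-N} \to 0$. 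Everything else is a verbatim transcription of the argument already given for Type I, which is why the authors are justified in omitting it.
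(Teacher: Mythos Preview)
Your proposal is correct and follows precisely the route the paper intends: the authors explicitly omit the proof of Proposition~\ref{three11} because it is ``almost identical'' to that of Proposition~\ref{two11}, and your argument is exactly that transcription, with the cut at $\lfloor N/q\rfloor$ replacing $\lfloor\sqrt{N}\rfloor$ and the growth comparison $\gamma^{q\lfloor N/q\rfloor}\le \gamma^{N}=o(\gamma^{qN})$ replacing \eqref{ash1}--\eqref{ash2}. The only cosmetic slip is the ``$1+$'' in your expression for $\max(S_{N,1}^{(q)})$: for Type~II the $i=1$ term is $|\mathcal{T}_{q-1}|=\|M^{q-2}\|$, not $1$, but this does not affect the asymptotic estimate.
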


The following lemma is useful for the proof Theorem \ref{Thm: 2}.

\begin{lem}\label{three12}
For $k \geq 3$ and $1 \leq j \leq d$, under the action of the Ising sum $S_{N}^{(q)}$, each node (i.e. finite word) ending with `$j$' on the  corresponding $(k\!-\!1)$-th level of a Markov-Cayley tree generates Ising potentials with
\[
R \left((q-1)k+1,j \right) = \sum_{m = 1}^{d} (M^{(q-1)k})_{j,m} 
\]
vertices on the $(qk\!-\!1)$-th level of the Markov-Cayley tree.  The corresponding potential is 
\begin{equation}
U( \{k,q k\},\sigma) = \beta \sum_{i=1}^{R((q\!-\!1)k\!+\!1,j)} \sigma_{k} \sigma_{q k,i} ,  \label{add55}
\end{equation}
where $\sigma_{k}, \sigma_{q k,i}$ are i.i.d. random variables taking values in set $\{-1,+1\}$.
\end{lem}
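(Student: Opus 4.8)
The plan is to mirror the proof of Lemma~\ref{two13}, now applied to the arithmetic-progression structure of Type~II. The key observation, as in Type~I, is that two spins $\sigma_i$ and $\sigma_{qi}$ contribute to the Hamiltonian $S_N^{(q)}$ only when the vertex of level $i-1$ carrying $\sigma_i$ lies on the path from the root to the vertex of level $qi-1$ carrying $\sigma_{qi}$; thus the interaction is confined to a single path and the combinatorics reduces to counting paths on the Markov-Cayley tree. First I would fix $k\geq 3$ and $1\leq j\leq d$ and consider a node $\nu$ ending with symbol `$j$' on the $(k-1)$-th level. The spin $\sigma_\nu=\sigma_k$ couples, under $S_N^{(q)}$, to every spin $\sigma_{qk}$ sitting on a descendant of $\nu$ at level $qk-1$. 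Counting these descendants is the same as counting how many paths the symbol `$j$', viewed as a single-letter word at level~$1$, generates up to level $(qk-1)-(k-1)+1=(q-1)k+1$; by the interpretation of the row sum $R(\cdot,\cdot)$ recalled in the Preliminaries, this number is exactly $R((q-1)k+1,j)=\sum_{m=1}^d (M^{(q-1)k})_{j,m}$.

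Next I would verify the index bookkeeping carefully: a word of length~$1$ extended $(q-1)k$ steps lands at level $(q-1)k+1$, so the exponent of $M$ is $(q-1)k$, matching the claimed formula $R((q-1)k+1,j)=\sum_{m=1}^d (M^{(q-1)k})_{j,m}$. Having established the multiplicity, the potential contributed by $\nu$ is the sum of $\beta\,\sigma_k\sigma_{qk,i}$ over the $i=1,\dots,R((q-1)k+1,j)$ descendant spins at level $qk-1$, which is precisely \eqref{add55}; and since all spins are assigned independently according to $\mathbb{P}_p$, the variables $\sigma_k$ and the $\sigma_{qk,i}$ are i.i.d.\ $\{-1,+1\}$-valued, as asserted. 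Finally I would note that $k\geq 3$ guarantees $(q-1)k\geq 2>0$, so $M^{(q-1)k}$ is a genuine positive power and the tree block $\mathcal{T}_b(k-1,\,qk-1,\,R((q-1)k+1,j),\,j)$ is well defined.

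The only mild subtlety—hence the ``main obstacle''—is the path/level bookkeeping: one must be sure that relabelling the node $\nu$ of level $k-1$ as a fresh length-$1$ word beginning with `$j$' does not lose or double-count paths, i.e.\ that the subtree rooted at $\nu$ is isomorphic (as a rooted labelled tree, via the transition matrix $M$) to the subtree generated by the single letter `$j$', and that the target level shifts by exactly $k-1$. This is immediate from the Markov structure of $\mathcal{T}=\Sigma_M^*$ (the descendants of a node depend only on its last symbol), but it is the place where an off-by-one error would creep in, so I would state it explicitly before concluding. Everything else is a direct transcription of the argument already used for Lemma~\ref{two13}.
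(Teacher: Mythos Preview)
Your proposal is correct and follows essentially the same approach as the paper: both refer back to Lemma~\ref{two13}, relabel the node ending in `$j$' at level $k-1$ as a length-$1$ word at level~$1$, and invoke the row-sum interpretation to count the $R((q-1)k+1,j)$ descendants at level $qk-1$, from which \eqref{add55} is immediate. Your version is simply more explicit about the level shift $(qk-1)-(k-1)+1=(q-1)k+1$ and the Markov property justifying the relabelling, which the paper leaves implicit.
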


\begin{proof}
 Refer to the proof of Lemma \ref{two13}. For any $k \geq 3$ and $1 \leq j \leq d$, our goal is to discover how many paths a node ending with `$j$' on the $(k\!-\!1)$-th level will generate on the $(q k\!-\!1)$-th level. This is equivalent to the number of paths that the symbol `$j$' on the $1$-st level generates on the $((q\!-\!1)k\!+\!1)$-th level.  By definition of the row sum, the symbol `$j$' on the $1$-st level will generate $R \left((q\!-\!1)k\!+\!1,j \right)$ paths on the $((q\!-\!1)k\!+\!1)$-th level. It is clear that \eqref{add55} follows.
\end{proof}

\begin{rmk}\label{three13}
For $k \geq 3$ and $1 \leq j \leq d$, the number of nodes on the $(q k-1)$-th level of a Markov-Cayley tree can be expressed as
\[
 \|M^{q k-2}\| = \sum_{j=1}^{d} R \left( (q-1)k+1,j \right) C(k-1,j ) .
\]
\end{rmk}

Recall the definition of tree blocks in Section 3.  According to the multiplicative relation of the Ising model $S_{N}^{(q)}$, for  $k \geq 3$ and $1 \leq j \leq d$ one can get the corresponding tree block written as $\mathcal{T}_b (k\!-\!1,q k\!-\!1,R(q k\!-\!k\!+\!1,j),j)$ . Then the expectation of such a tree block is expressed as $F_j = F_{j}^{+} +F_{j}^{-}$, where

\vspace{-0.35cm}    

\begin{equation}
\left\{ 
\begin{array}{rl}
F_{j}^{+}  &=  \  \    p \left[pe^{\beta} + (1-p)e^{-\beta} \right]^{R \left( (q-1)k+1,j \right)} \\
F_{j}^{-}  &=  \   \   (1-p) \left[(1-p)e^{\beta} + pe^{-\beta} \right]^{R \left( (q-1)k+1,j \right)}
\end{array} \right.    \label{e789}
\end{equation}
are the expectations when this node ending with `$j$'  takes $+1$ and $-1$, respectively.

\vbox{}

The proof of Theorem \ref{Thm: 2} is presented.

\begin{proof}[Proof of Theorem \ref{Thm: 2}]
 Initially, suppose that $\mathcal{T}$ is a Markov-Cayley tree and satisfies \eqref{add22}.

\noindent \textbf{1.}  Proposition \ref{three11} says that the energy distribution on $\Delta_{\lfloor N/q \rfloor-1}$ has no effect on the energy distribution of $\Delta_{q N-1}$ as $N$ approaches infinity. This means that we only need to calculate the Ising potentials generated by nodes on the $\lfloor \!N/q \rfloor$-th level to the $(N\!-\!1)$-th level of  $\Delta_{q N\!-\!1}$.  For $\lfloor \!N/q \rfloor \!+\!1 \leq k \leq N$ and $1 \leq j \leq d$, each node ending with `$j$' of the $(k\!-\!1)$-th level will generate  Ising potentials with only $R((q\!-\!1)k\!+\!1,j)$ nodes of the $(q k\!-\!1)$-th level. This implies that every subsystem $\mathcal{T}_b (k\!-\!1,q k\!-\!1,R(q k\!-\!k\!+\!1,j),j)$ is independent of each other.  Then applying Lemma \ref{two12}, we have
\[
\mathbb{E}_p \left(\exp \!\left\{ \beta \!\sum_{k=\lfloor \!N/q \rfloor \!+ \!1}^{N} \!\sigma_k \sigma_{q k} \right\} \right) = \prod_{k=\lfloor \!N/q \rfloor \!+ \!1}^{N} \prod_{j=1}^{d} \left(F_{j}^{+} + F_{j}^{-} \right)^{C(k-1,j)} ,
\]
which yields
\[
\log \mathbb{E}_p  \left(\exp \!\left\{ \beta \!\sum_{k=\lfloor \!N/q \rfloor \!+ \!1}^{N} \!\sigma_k \sigma_{q k} \right\}\right) = \sum_{k=\lfloor \!N/q \rfloor \!+ \!1}^{N} \sum_{j=1}^{d} C(k\!-\!1,j) \log \left[F_{j}^{+} \left(1 \!+ \!\frac{F_{j}^{-}}{F_{j}^{+}} \right) \right] .  
\]
In connection with \eqref{add22} and Remark \ref{three13}, we obtain

\vspace{-0.35cm}    

\begin{align*}
F_p (\beta) &= \limsup_{N \to \infty}  \!\sum_{k= \lfloor \!N/q \rfloor + 1}^{N} \!\sum_{j=1}^{d}  \frac{C(k\!-\!1,j)}{|\Delta_{q N-1}|} \left[ \log F_{j}^{+} +  \log \left(1 + \frac{F_{j}^{-}}{F_{j}^{+}} \right)  \right]  \\ 
&= \limsup_{N \to \infty}  \!\sum_{k= \lfloor \!N/q \rfloor \!+ \!1}^{N}\!\sum_{j=1}^{d}  \frac{C(k\!-\!\!1,j) R \left( (q\!-\!\!1)k\!+\!\!1,j \right)}{|\Delta_{q N-1}|} \log \left[pe^{\beta} \!\!+ \!(1\!-\!p)e^{-\beta} \right] \!+ \!\mathcal{G}_{q} (\beta)  \\
&= \limsup_{N \to \infty}  \sum_{k= \lfloor \!N/q \rfloor + 1}^{N}  \frac{\|M^{q k-2}\|}{|\Delta_{q N-1}|} \log \left[pe^{\beta} + (1-p)e^{-\beta} \right]  +\mathcal{G}_{q} (\beta)  \\
&= \frac{\gamma^{q-1} (\gamma-1)}{\gamma^q -1} \log \left[pe^{\beta} + (1-p)e^{-\beta} \right] + \mathcal{G}_{q} (\beta) ,
\end{align*}
where
\begin{equation}
\mathcal{G}_{q}(\beta) = \limsup_{N \to \infty}  \sum_{k= \lfloor \!N/q \rfloor + 1}^{N} \sum_{j=1}^{d} \frac{C(k\!-\!1,j)}{| \Delta_{q N-1} |}   \log  \left(1 + \frac{F_{j}^{-}}{F_{j}^{+}} \right)  .  \label{ssa1}
\end{equation}

\noindent \textbf{2.} The discussion of the differentiation of $F_p (\beta)$ is similar to the proof of (2) in Theorem \ref{Thm: 1}. Recalling that $B(\beta)$ in \eqref{lg23}, we let 
\[
\bar{\mathcal{L}} (\beta)= \lim_{k \to \infty} \log \left(1 + \frac{1-p}{p}  \left[B(\beta)\right]^{R((q-1)k+1,j)} \right) .
\]
Then, we use the same method to analyze $B(\beta)$ and $\bar{\mathcal{L}} (\beta)$, which shows that $\mathcal{G}_{q} (\beta)$ is uniformly convergent with respect to $\beta \in \mathbb{R}$. Further, we will prove the convergence of the series  
\[
\mathcal{S}_{q} (\beta) = \limsup_{N \to \infty} \sum_{k=\lfloor \!N/q \rfloor \!+ \!1}^{N} \sum_{j=1}^{d}  \frac{C(k-1,j)}{|\Delta_{q N-1}|} \left[ \log \left(1 + \frac{F_{j}^{-}}{F_{j}^{+}} \right) \right]' .
\]
Indeed, for $k \geq 3$ and $1 \leq j \leq d$ we can verify that 

\vspace{-0.35cm}    

\begin{align*}
 \left[ \log \left(1 \!+ \!\frac{F_{j}^{-}}{F_{j}^{+}} \right) \right]'  &= \frac{R((q\!-\!1)k\!+\!1,j) \frac{1-p}{p}  B^{R((q\!-\!1)k\!+\!1,j)\!-\!1}}{1+ \frac{1-p}{p}  B^{R((q-1)k+1,j)}} \cdot  \left[ B(\beta)  \right]'  \\
&= \frac{R((q\!-\!1)k\!+\!1,j) \frac{1-p}{p}  B^{R((q\!-\!1)k\!+\!1,j)\!-\!1}}{1+ \frac{1-p}{p}  B^{R((q-1)k+1,j)}} \cdot  \frac{2(1-2p)}{(pe^{\beta} \!+ \!(1\!-\!p)e^{-\beta})^2}
\end{align*}
which implies that

\vspace{-0.35cm}    

\begin{align*}
   \left\arrowvert \left[ \log  \left(1 \!+ \!\frac{F_{j}^{-}}{F_{j}^{+}} \right)    \right]' \right\arrowvert 
 &\leq  \left\arrowvert   \frac{R((q\!-\!1)k\!+\!1,j) \frac{1\!-\!p}{p}  B^{R((q\!-\!1)k\!+\!1,j)\!-\!1}}{ \frac{1-p}{p} B^{R((q-1)k+1,j)}} \right\arrowvert    \left\arrowvert   \frac{2(1-2p)}{(pe^{\beta} \!+ \!(1\!-\!p)e^{-\!\beta})^2}   \right\arrowvert     \\
&= R((q\!-\!1)k\!+\!1,j) \left\arrowvert  \frac{pe^{\beta} \!+ \!(1\!-\!p)e^{-\beta}}{(1\!-\!p)e^{\beta} \!+ \!pe^{-\beta}}  \right\arrowvert   \left\arrowvert   \frac{2(1-2p)}{(pe^{\beta} \!+ \!(1\!-\!p)e^{-\beta})^2}   \right\arrowvert  \\
&=  R((q\!-\!1)k\!+\!1,j) \left\arrowvert  \frac{2(1-2p)}{p(1\!-\!p)(e^{\beta} \!+ \!e^{-\beta})^2 \!+ \!(1\!-\!2p)^2}  \right\arrowvert 
\end{align*}
Combining \eqref{lg33} and Remark \ref{three13}, we conclude that 

\vspace{-0.35cm}    

\begin{align*}
\mathcal{S}_{q} (\beta) &\leq \limsup_{N \to \infty}  \sum_{k=\lfloor \!N/q \rfloor \!+ \!1}^{N} \sum_{j=1}^{d}  \frac{C(k\!-\!1,j) R((q\!-\!1)k\!+\!1,j)}{|\Delta_{q N-1}|}  N_{a,b}  \\
&= \limsup_{N \to \infty}  \sum_{k=\lfloor \!N/q \rfloor + 1}^{N}  \frac{\|M^{qk-2}\|}{|\Delta_{q N-1}|}  N_{a,b}  \\
&= \frac{\gamma^{q-1} (\gamma-1)}{\gamma^q -1}   N_{a,b} .
\end{align*}
It is therefore a fact for us to realize by the Weierstrass M-test that the sum $\mathcal{S}_{q} (\beta)$ has uniform convergence. This implies that $F_p (\beta)$ is a differentiable function. 

\noindent \textbf{3.}  The result follows from Theorem \ref{th01} and the statements (1), (2) of Theorem \ref{Thm: 2}.
\end{proof}

\vbox{}

\bibliographystyle{amsplain}  

\bibliography{ban}  

\end{document}